\newenvironment{conjecture*}[1][]{\textbf{Conjecture #1\hspace{.3em}}}{}
\newenvironment{theorem*}[1]{\textbf{#1}\itshape \hspace{.3em}}{\upshape}
\newenvironment{remark*}[1]{\textbf{#1}\itshape \hspace{.3em}}{\upshape}
\newenvironment{example*}[1]{\textbf{#1}\itshape \hspace{.3em}}{\upshape}
\newenvironment{proof}[1][]{\textbf{Proof #1\hspace{.3em}}}{}
\newtheorem{definition}{Definition}[section]
\newtheorem{theorem}[definition]{Theorem}
\newtheorem{lemma}[definition]{Lemma}
\newtheorem{corollary}[definition]{Corollary}
\newcounter{kpremark}
\newcounter{proofitem}
\newcommand{\mod}[1]{\ensuremath{\hspace{.5em}(#1)}}
\newcommand{\modd}{\ensuremath{M(\mathit{odd})}}
\begin{document}

\begin{frontmatter}






\title{A characteristic 2 recurrence related to \bm{$U_{5}$}, with a Hecke algebra application}
\author{Paul Monsky}

\address{Brandeis University, Waltham MA  02454-9110, USA. monsky@brandeis.edu}

\begin{abstract}
In arXiv:1603.03910 [math.NT] we introduced some $C_{n}$ in $Z/2[t]$ defined by a linear recurrence and showed that each $C_{n}$, $n\equiv 0 \mod{4}$, is a sum of $C_{k}$, $k<n$. Combining this with results from arXiv:1508.07523 [math.NT] we proved that the space $K$, consisting of those odd mod~2 modular forms of level $\Gamma_{0}(3)$ that are annihilated by the operator $U_{3}+I$, has a basis $m_{i,j}$ ``adapted to $T_{7}$ and $T_{13}$'' in the sense of Nicolas and Serre.  (And so the ``completed shallow Hecke algebra'' attached to $K$ is a power series ring in $T_{7}$ and $T_{13}$.)

This note derives analogous results in level $\Gamma_{0}(5)$. Now $U_{3}+I$ is replaced by $U_{5}+I$, and the operators $T_{7}$ and $T_{13}$ by $T_{3}$ and $T_{7}$. In place of level $\Gamma_{0}(3)$ results from 1508.07523, we use level $\Gamma_{0}(5)$ results from arXiv:1603.07085 [math.NT]. A linear recurrence again plays the key role. Now $C_{n+6} = C_{n+5} + (t^{6}+t^{5}+t^{2}+t)C_{n}+t^{n}(t^{2}+t)$, $C_{0}=0$, $C_{1}=C_{2}=1$, $C_{3}=t$, $C_{4}=t^{2}$, $C_{5}=t^{4}+t^{2}+t$, and we prove that each $C_{n}$, $n\equiv 0$ or $2\mod{6}$ is a sum of $C_{k}$, $k<n$.
\end{abstract}


\end{frontmatter}


\section{Introduction. The polynomials \bm{$C_{n}$}}
\label{section1}

The first 4 sections of this note, apparently unconnected with modular forms, consist of calculations in a polynomial ring $Z/2[r]$ that might seem unmotivated. The motivation in fact comes from \cite{4}.  In that note I introduce commuting Hecke operators $T_{p}:Z/2[[x]]\rightarrow Z/2[[x]]$, $p$ an odd prime, together with a subspace, $\modd$, of $Z/2[[x]]$ stabilized by the $T_{p}$, $p\ne 5$. $\modd$ is the ``space of odd mod~2 modular forms of level $\Gamma_{0}(5)$,'' and contains the reductions $F$ and $G$ of the expansions at infinity of $\Delta(z)$ and $\Delta(5z)$. I construct subspaces $N2\supset N1$ of $\modd$, also stabilized by these $T_{p}$, and write $N2/N1$ as a direct sum $N2a\oplus N2b$. I then show that $N2a$ and $N2b$ are stabilized by the $T_{p}$, $p\equiv 1,3,7,9\mod{20}$ and that $N2a$ has a basis $m_{i,j}$ ``adapted to $T_{3}$ and $T_{7}$'' in the sense of Nicolas and Serre, with $m_{0,0}=F$.

Now let $U_{5}:Z/2[[x]]\rightarrow Z/2[[x]]$ be the map $\sum c_{n}x^{n}\rightarrow \sum c_{5n}x^{n}$. It's easy to see that $U_{5}$ stabilizes $\modd$ and commutes with the $T_{p}$, $p\ne 5$. So the kernel, $K$, of $U_{5}+I$ acting on $\modd$ is stabilized by \emph{all} $T_{p}$, $p\ne 5$ (or $2$), and there is a shallow Hecke algebra attached to $K$, generated by these $T_{p}$.

The goal of this note is to show that $K$, like $N2a$, has a basis $m_{i,j}$ adapted to $T_{3}$ and $T_{7}$; now $m_{0,0}=F+G$. In fact we show that $K\subset N2$, and that the projection map from $N2$ to $N2a$ maps $K$ bijectively to $N2a$. Since the projection map preserves the action of $T_{3}$ and $T_{7}$, $K$ has the desired basis. A consequence is that there is a faithful action of $Z/2[[X,Y]]$ on $K$ with $X$ and $Y$ acting by $T_{3}$ and $T_{7}$, and that each $T_{p}:K\rightarrow K$, $p\ne 2$ or $5$ is multiplication by an element of $(X,Y)$. In a more elaborate language this says that the ``completed shallow Hecke algebra'' attached to $K$ is a power series ring in $T_{3}$ and $T_{7}$.

Now the statement that $K\subset N2$ and projects bijectively from $N2/N1$ to $N2a$ can be phrased without invoking modular forms; it is just algebra in a polynomial ring. Namely there is an $r$ in $Z/2[[x]]$ with $r^{2}+r=F+G$, and using results from \cite{4} one can show that the principal actors, $F$, $G$, $\modd$ $U_{5}$, $K$, $N1$, $N2a$, $N2b$ can be defined purely in terms of this $r$. In sections \ref{section1}--\ref{section4} we take $r$ to be an indeterminate over $Z/2$, and define these principal actors. (But $U_{5}$ will be called $U$.) We then show that $K\subset N2$ and that $K\rightarrow N2a$ is bijective. In section \ref{section5}, the actors drop their masks, revealing their connections with modular forms, and we conclude that the kernel of $U_{5}+I$ acting on $\modd$ has the desired basis.

Here's a preview of the section \ref{section1}--\ref{section4} calculations in $Z/2[r]$. $F$ and $G$ are $r(r+1)^{5}$ and $r^{5}(r+1)$. $\modd$ is spanned by the  $(r^{2}+r)r^{2n}$, and $U:Z/2[r]\rightarrow Z/2[r]$ is a certain operator with $U(Gf)=FU(f)$. In section \ref{section1} we show that $U+I$ takes $(r^{2}+r)r^{2n}$ to $(r^{2}+r)C_{n}(r^{2})$ where the $C_{n}$ lie in $Z/2[t]$ and satisfy a certain recurrence. In section \ref{section2} we use this recurrence to show that each $C_{n}$ with $n\equiv 0$ or $2\mod{6}$ is a sum of $C_{k}$, $k<n$, and consequently that there are $g_{n}$ of degree $n$, one for each $n\equiv 0$ or $2\mod{6}$, with $f_{n}=(r^{2}+r)g_{n}(r^{2})$ forming a basis of $K$. In Theorem \ref{theorem3.5} of section \ref{section3} we show that these $g_{n}$ may be chosen to satisfy somewhat more stringent conditions. In section \ref{section4} we re-examine the arguments of section \ref{section2}. We construct $Z/2[G^{2}]$ submodules $N1$ and $N2$ of $\modd$ with bases $\{G\}$ and $\{G,F,F^{2}G,F^{3},F^{4}G\}$ and show that $K\subset N2$. We then define certain $J_{k}$, $(k,10)=1$, in $N2$ with $J_{k+10}=G^{2}J_{k}$. $N2a$ and $N2b$ are the subspaces of $N2/N1$ spanned by the $J_{k}$ with $k\equiv 1,3,7,9\mod{20}$ and the $J_{k}$ with $k\equiv 11,13,17,19\mod{20}$. We use Theorem \ref{theorem3.5} to get information about the image of $f_{n}$ under the composite map $N2\rightarrow N2/N1\rightarrow N2a$, and establish the desired bijection.

\begin{definition}
\label{def1.1}
$F$ and $G$ in $Z/2[r]$ are $r(r+1)^{5}$ and $r^{5}(r+1)$. Note that $F+G=r(r+1)$ and $(F+G)^{6}+FG=0$.
\end{definition}

\begin{definition}
\label{def1.2}
$\sigma : Z/2[r]\rightarrow Z/2[r]$ is semi-linear if it is $Z/2$-linear, and $\sigma(Gf)=F\sigma (f)$.
\end{definition}

Since a basis of $Z/2[r]$ as module over $Z/2[G]=Z/2[r^{6}+r^{5}]$ is $1,r,r^{2},r^{3},r^{4},r^{5}$, a semi-linear map is determined by the images of these 6 elements. And for any choice of 6 elements to be images there is a corresponding semi-linear map.

\begin{definition}
\label{def1.3}
$U : Z/2[r]\rightarrow Z/2[r]$ is the semi-linear map taking $1,r,r^{2},r^{3},r^{4}$ and $r^{5}$ to $1,r,r^{2},r^{3}+r^{2}+r,r^{4}$ and $r^{5}+r^{4}+r$. Since $U(1)=1$, $U(G^{n})=F^{n}$.
\end{definition}

\begin{lemma}
\label{lemma1.4}
$U(f^{2})=(U(f))^{2}$
\end{lemma}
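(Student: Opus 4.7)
\begin{proofsketch}
The plan is to reduce the identity to a finite check on a $Z/2[G]$-module basis and then verify the base cases directly. Since $U$ is $Z/2$-linear by definition and squaring is additive in characteristic~$2$, both $\alpha(f) := U(f^2)$ and $\beta(f) := U(f)^2$ are $Z/2$-linear maps $Z/2[r] \to Z/2[r]$; I want to show $\alpha = \beta$. Next I would check that both satisfy the twisted semi-linearity $\phi(Gh) = F^2 \phi(h)$: for $\alpha$, apply the defining relation $U(Gh) = F U(h)$ twice to get $U(G^2 h^2) = F \cdot U(G h^2) = F^2 U(h^2)$; for $\beta$, compute directly $(U(Gh))^2 = (F U(h))^2 = F^2 (U(h))^2$. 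Combined with additivity this yields $\phi(a(G) h) = a(F^2) \phi(h)$ for every $a \in Z/2[G]$.

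Since $\{1, r, r^2, r^3, r^4, r^5\}$ is a $Z/2[G]$-basis of $Z/2[r]$ (as noted after Definition~\ref{def1.2}), both $\alpha$ and $\beta$ are then determined by their values on these six elements, and it suffices to verify $U(r^{2i}) = (U(r^i))^2$ for $i = 0, 1, 2, 3, 4, 5$. The cases $i = 0, 1, 2$ are immediate from the definition of $U$. For $i = 3, 4, 5$ I would first rewrite $r^6$, $r^8$, and $r^{10}$ in the basis by repeated use of $r^6 = G + r^5$, then apply $U$ term-by-term via the prescribed images, and finally substitute $F = r^6 + r^5 + r^2 + r$ and simplify, comparing with the squares $(r^3 + r^2 + r)^2 = r^6 + r^4 + r^2$, $(r^4)^2 = r^8$, and $(r^5 + r^4 + r)^2 = r^{10} + r^8 + r^2$.

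I expect the main obstacle to be nothing conceptual but the arithmetic bookkeeping in those three cases. The specific values $U(r^3) = r^3 + r^2 + r$ and $U(r^5) = r^5 + r^4 + r$ chosen in Definition~\ref{def1.3} appear to be engineered precisely so that these squaring identities hold, so the check should go through cleanly once one is careful with the reduction modulo $G$.
\end{proofsketch}
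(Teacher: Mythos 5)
Your proposal is correct and follows essentially the same route as the paper: reduce via semi-linearity to checking $U(r^{2i})=(U(r^i))^2$ on the $Z/2[G]$-basis $1,r,\dots,r^5$, then verify the three nontrivial cases $i=3,4,5$ by rewriting $r^6,r^8,r^{10}$ in the basis and computing directly. The only difference is that you spell out the reduction step (both sides being $Z/2$-linear and satisfying $\phi(Gh)=F^2\phi(h)$) more explicitly than the paper does, which the paper leaves implicit in the phrase ``it suffices to prove this when $f$ is $1,r,\dots,r^5$.''
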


\begin{proof}
Since $1,r,r^{2},r^{3},r^{4},r^{5}$ form a basis of $Z/2[r]$ as $Z/2[G]$-module, and $U(G)=F$, it suffices to prove this when $f$ is $1,r,r^{2},r^{3},r^{4}$ or $r^{5}$. The result is immediate for $1$, $r$ and $r^{2}$. And:
\vspace{-2ex}
\begin{eqnarray*}
r^{6} &=& G+r^{5}.\ \mbox{So }U(r^{6})=F+(r^{5}+r^{4}+r) = r^{6}+r^{4}+r^{2} = (U(r^{3})^{2}.\\
r^{8} &=& (r^{2}+r)G+r^{6}.\ \mbox{So }U(r^{8})=(r^{2}+r)F+r^{6}+r^{4}+r^{2} = r^{8} = (U(r^{4})^{2}.\\
r^{10} &=& (r^{4}+r^{3})G+r^{8}.\ \mbox{So }U(r^{10})=(r^{4}+r^{3}+r^{2}+r)F+r^{8} \\
&=& r^{2}(r+1)^{8}+r^{8} = r^{10}+r^{8}+r^{2} =(U(r^{5})^{2}.
\end{eqnarray*}
\qed
\end{proof}

\begin{lemma}
\label{lemma1.5}\hspace{2em}\\
\vspace{-2ex}
\begin{enumerate}
\item[(a)] $U(r^{n+6})=U(r^{n+5})+(r^{6}+r^{5}+r^{2}+r)U(r^{n})$.
\item[(b)] $U((r^{2}+r)r^{2n})=(r^{2}+r)A_{n}(r^{2})$ for some $A_{n}$ in $Z/2[t]$.
\end{enumerate}
\end{lemma}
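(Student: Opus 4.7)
The plan is to prove (a) first by a direct application of semi-linearity, then deduce (b) from (a) combined with Lemma~\ref{lemma1.4}.

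For (a), the key observation is that $G = r^{5}(r+1) = r^{6} + r^{5}$, so in $Z/2[r]$ we have $r^{n+6} = r^{n+5} + r^{n} \cdot G$. Applying $U$ and invoking semi-linearity on the term $r^{n} \cdot G$ gives $U(r^{n+6}) = U(r^{n+5}) + F \cdot U(r^{n})$. Expanding $F = r(r+1)^{5} = r^{6} + r^{5} + r^{2} + r$ (using $(r+1)^{4} = r^{4} + 1$ in characteristic~$2$) yields the asserted identity.

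For (b), rewrite $(r^{2}+r)r^{2n} = r^{2n+2} + r^{2n+1}$, so the task reduces to showing $U(r^{2n+2}) + U(r^{2n+1}) \in (r^{2}+r) \cdot Z/2[r^{2}]$. I would dispose of $n = 0$ and $n = 1$ by direct computation; these yield $(r^{2}+r) \cdot 1$ and $(r^{2}+r)(r^{2}+1)$ respectively. For $n \geq 2$, substitute $m = 2n-4$ into the recurrence from (a) to get $U(r^{2n+2}) + U(r^{2n+1}) = F \cdot U(r^{2n-4})$. Lemma~\ref{lemma1.4} converts $U(r^{2n-4})$ into the square $(U(r^{n-2}))^{2}$, a polynomial in $r^{2}$. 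The final ingredient is the factorization $F = r(r+1)^{5} = (r^{2}+r)(r+1)^{4} = (r^{2}+r)(r^{2}+1)^{2}$, which pulls out the factor $(r^{2}+r)$ and leaves the polynomial in $r^{2}$ given by $((r^{2}+1)\,U(r^{n-2}))^{2}$.

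The only delicate point is selecting the right index shift: $m = 2n-4$ is forced by the need for the extra factor $F \cdot U(r^{m})$ from (a) to land on an even index $m$, so that Lemma~\ref{lemma1.4} makes it a square. Once that is spotted, the argument turns on the clean identity $F = (r^{2}+r)(r^{2}+1)^{2}$, which says that the factor $(r^{2}+r)$ of $F$ has an even-polynomial cofactor~---~precisely what combines $U(r^{2n+1})$ and $U(r^{2n+2})$ into $(r^{2}+r)$ times a polynomial in $r^{2}$.
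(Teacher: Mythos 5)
Your proof is correct and follows essentially the same route as the paper: (a) from $r^{6}=r^{5}+G$ and semi-linearity, and (b) by direct computation for $n=0,1$ and, for $n\ge 2$, by combining the recurrence at index $2n-4$ with Lemma~\ref{lemma1.4} and the factorization $F=(r^{2}+r)(r^{2}+1)^{2}=(r^{2}+r)(r^{4}+1)$. No issues.
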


\begin{proof}
$r^{6}=r^{5}+G$. Multiplying by $r^{n}$, applying $U$ and using semi-linearity, we get (a).  Now $U(r^{2}+r)=r^{2}+r$, while $U(r^{4}+r^{3})=(r^{2}+r)(r^{2}+1)$. So for $n=0$ and $n=1$ we have (b) with $A_{0}=1$ and $A_{1}=t+1$. Suppose $n\ge 2$. By (a), $U((r^{2}+r)r^{2n})= (r^{6}+r^{5}+r^{2}+r)(U(r^{2n-4})$, which by Lemma \ref{lemma1.4} is $(r^{2}+r)(r^{4}+1)(U(r^{n-2}))^{2}$. So if we write $U(r^{n-2})$ as $g(r)$, we get (b) with $A_{n}=(t^{2}+1)g(t)$.
\qed
\end{proof}

\begin{lemma}
\label{lemma1.6}
Let $A_{n}$ be as in Lemma \ref{lemma1.5}. Then:
\vspace{-2ex}
\begin{enumerate}
\item[(a)] $A_{n+6}=A_{n+5}+(t^{6}+t^{5}+t^{2}+t)A_{n}$.
\item[(b)] $A_{0},A_{1},A_{2},A_{3},A_{4},A_{5}$ are $1,t+1,t^{2}+1,t^{3}+t,t^{4}+t^{2},t^{5}+t^{4}+t^{2}+t$.
\end{enumerate}
\end{lemma}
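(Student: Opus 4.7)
My plan for part (b) is direct computation. For each $n\in\{0,1,2,3,4,5\}$, I expand $(r^{2}+r)r^{2n}=r^{2n+2}+r^{2n+1}$ and compute $U$ of each summand using the defining values of $U$ on $1,r,\ldots,r^{5}$, Lemma~\ref{lemma1.4} for even exponents (so $U(r^{2k})=(U(r^{k}))^{2}$), and Lemma~\ref{lemma1.5}(a) to fold any exponent $\ge 7$ back down. In each case the answer factors as $(r^{2}+r)$ times a polynomial in $r^{2}$; replacing $r^{2}$ by $t$ reads off the claimed $A_{n}$.

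For part (a), my plan is to work uniformly in one variable rather than separating small cases. Set
\[V_{k}=U\bigl((r^{2}+r)r^{k}\bigr)=U(r^{k+2})+U(r^{k+1}),\]
so that, by Lemma~\ref{lemma1.5}(b), $V_{2n}=(r^{2}+r)A_{n}(r^{2})$. I will establish the recurrence
\[V_{k+12}=V_{k+10}+(r^{12}+r^{10}+r^{4}+r^{2})V_{k}\qquad(k\ge 0).\]
Specializing to $k=2n$, cancelling the factor $r^{2}+r$, and substituting $t=r^{2}$ then yields (a) for every $n\ge 0$.

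The recurrence on the $V_{k}$ will come from applying Lemma~\ref{lemma1.5}(a) twice, in two different ways. First, with the role of the ``$n$'' in that lemma played by $k+2$ and then by $k+1$, summing the two resulting identities gives
\[U(r^{k+8})+U(r^{k+6})=(r^{6}+r^{5}+r^{2}+r)V_{k}.\]
Next, with the role of ``$n$'' played by $k+8$ and then by $k+6$, the lemma gives
\[V_{k+12}=(r^{6}+r^{5}+r^{2}+r)U(r^{k+8}),\qquad V_{k+10}=(r^{6}+r^{5}+r^{2}+r)U(r^{k+6}).\]
Adding these in characteristic~$2$ and feeding in the previous identity,
\[V_{k+12}+V_{k+10}=(r^{6}+r^{5}+r^{2}+r)^{2}V_{k}=(r^{12}+r^{10}+r^{4}+r^{2})V_{k},\]
which is exactly the desired recurrence. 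The only point that needs checking along the way is that each invocation of Lemma~\ref{lemma1.5}(a) is legitimate, i.e.\ that the relevant index is $\ge 0$; this holds whenever $k\ge 0$, so no separate treatment of small $n$ is required and I do not expect a serious obstacle.
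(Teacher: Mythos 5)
Your proposal is correct, but it reaches the recurrence by a different route than the paper. For (a), the paper squares the basic relation $r^{6}=r^{5}+G$ to get $r^{12}=r^{10}+G^{2}$, multiplies by $(r^{2}+r)r^{2n}$, applies $U$ once (using $U(G^{2}f)=F^{2}U(f)$), and divides by $r^{2}+r$ --- a one-line deduction. You instead telescope Lemma \ref{lemma1.5}(a) twice to obtain $V_{k+12}=V_{k+10}+(r^{6}+r^{5}+r^{2}+r)^{2}V_{k}$ for $V_{k}=U((r^{2}+r)r^{k})$; since $r^{6}+r^{5}+r^{2}+r=F$, your coefficient $(r^{6}+r^{5}+r^{2}+r)^{2}=F^{2}=r^{12}+r^{10}+r^{4}+r^{2}$ is exactly the paper's, so the two arguments are really the same identity approached additively versus multiplicatively. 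I verified each of your intermediate steps ($U(r^{k+8})+U(r^{k+6})=FV_{k}$, then $V_{k+12}=FU(r^{k+8})$ and $V_{k+10}=FU(r^{k+6})$); all are legitimate instances of Lemma \ref{lemma1.5}(a) for $k\ge 0$, and the specialization $k=2n$ followed by cancelling $r^{2}+r$ (valid since $Z/2[r]$ is a domain) gives (a). Your version has the small merit of proving the recurrence for all $k$, not just even $k$, without invoking semi-linearity beyond what Lemma \ref{lemma1.5}(a) already encodes. For (b), the paper sidesteps brute force by reusing the formula $A_{n}=(t^{2}+1)g(t)$ with $g(r)=U(r^{n-2})$ from the final sentence of the proof of Lemma \ref{lemma1.5}, so only $U(1),U(r),U(r^{2}),U(r^{3})$ need to be read off; your plan of directly expanding $U(r^{2n+2})+U(r^{2n+1})$ for $n\le 5$ is more laborious but sound --- I carried out all six computations and they do yield $1,t+1,t^{2}+1,t^{3}+t,t^{4}+t^{2},t^{5}+t^{4}+t^{2}+t$ --- though in a final write-up you should display those computations rather than merely promise them.
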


\begin{proof}
$r^{12}=r^{10}+G^{2}$. Multiplying by $(r^{2}+r)r^{2n}$, applying $U$, and then dividing by $r^{2}+r$ we find that $A_{n+6}(r^{2})=A_{n+5}(r^{2})+(r^{12}+r^{10}+r^{4}+r^{2})A_{n}(r^{2})$, giving (a). $A_{0}$ and $A_{1}$ are $1$ and $t+1$. When $n=2,3,4,5$, then $U(r^{n-2})$ is $1,r,r^{2},r^{3}+r^{2}+r$, and we use the final sentence in the proof of Lemma \ref{lemma1.5} to get (b). 
\qed
\end{proof}

\begin{definition}
\label{def1.7}
$C_{n}$ in $Z/2[t]$ is $A_{n}+t^{n}$, with $A_{n}$ as in Lemma \ref{lemma1.5}.
\end{definition}

\begin{theorem}
\label{theorem1.8}\hspace{2em}\\
\vspace{-2ex}
\begin{enumerate}
\item[(a)] $C_{n+6}=C_{n+5}+(t^{6}+t^{5}+t^{2}+t)C_{n}+t^{n}(t^{2}+t)$.
\item[(b)] $C_{0},C_{1},C_{2},C_{3},C_{4},C_{5}$ are $0,1,1,t,t^{2},t^{4}+t^{2}+t$.
\item[(c)] $U+I$ takes $(r^{2}+r)r^{2n}$ to $(r^{2}+r)C_{n}(r^{2})$.
\end{enumerate}
\end{theorem}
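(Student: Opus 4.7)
The theorem should follow almost immediately from the preceding lemmas by substituting $C_n = A_n + t^n$. I would handle the three parts in the order (b), (a), (c), since (b) is pure arithmetic and (a) reduces mechanically to Lemma~\ref{lemma1.6}(a).

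For (b), I would just read off $C_n = A_n + t^n$ using the explicit list in Lemma~\ref{lemma1.6}(b): each $A_n$ for $n \le 5$ has a leading $t^n$ term (with $A_0 = 1 = t^0$), so subtracting $t^n$ (which, in characteristic $2$, is adding it) kills that leading term and leaves exactly the polynomials claimed, namely $0, 1, 1, t, t^2, t^4+t^2+t$.

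For (a), I would substitute $A_k = C_k + t^k$ into the recurrence of Lemma~\ref{lemma1.6}(a). This gives
\[
C_{n+6} + t^{n+6} = C_{n+5} + t^{n+5} + (t^{6}+t^{5}+t^{2}+t)(C_n + t^n).
\]
Expanding the right-hand side, the $t^{n+6}$ cancels with the $t^{n+6}$ on the left (in characteristic $2$), the $t^{n+5}$ cancels against itself, and what remains of $(t^{6}+t^{5}+t^{2}+t)t^n$ is $t^{n+2}+t^{n+1} = t^n(t^2+t)$, yielding (a).

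For (c), I just combine Lemma~\ref{lemma1.5}(b) with the definition of $C_n$: since $U((r^2+r)r^{2n}) = (r^2+r)A_n(r^2)$, adding $(r^2+r)r^{2n} = (r^2+r)\cdot(r^2)^n$ to both sides gives
\[
(U+I)\bigl((r^2+r)r^{2n}\bigr) = (r^2+r)\bigl(A_n(r^2) + (r^2)^n\bigr) = (r^2+r)C_n(r^2).
\]
There is no real obstacle here — the whole theorem is a cosmetic repackaging of Lemma~\ref{lemma1.6} via the shift $A_n \mapsto A_n + t^n$, designed so that the inhomogeneous term $t^n(t^2+t)$ replaces the initial condition $A_0 = 1$ with the more convenient $C_0 = 0$, making the $C_n$ better suited to the later analysis of $(U+I)$ on the span of the $(r^2+r)r^{2n}$.
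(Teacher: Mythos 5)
Your proposal is correct and is exactly the paper's argument: the paper also derives (a) and (b) from Lemma~\ref{lemma1.6}(a),(b) and (c) from Lemma~\ref{lemma1.5}(b), merely stating this without writing out the substitution $C_n=A_n+t^n$ that you carry out explicitly. All three of your computations check out.
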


\begin{proof}
(a) and (b) follow from (a) and (b) of Lemma \ref{lemma1.6}, while (c) comes from Lemma \ref{lemma1.5}(b).
\qed
\end{proof}

\begin{lemma}
\label{lemma1.9}\hspace{2em}\\
\vspace{-2ex}
\begin{enumerate}
\item[(a)] If $n\equiv 1$ or $5\mod{6}$, degree $C_{n}=n-1$.
\item[(b)] If $n\equiv 3$ or $4\mod{6}$, degree $C_{n}=n-2$.
\item[(c)] If $n\equiv 0$ or $2\mod{6}$, degree $C_{n}\le n-2$.
\end{enumerate}
\end{lemma}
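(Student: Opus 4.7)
The natural approach is induction on $n$ with step $6$, using the recurrence from Theorem~\ref{theorem1.8}(a) to pass from $C_n$ and $C_{n+5}$ to $C_{n+6}$.

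First I would verify the six base cases $n = 0,1,2,3,4,5$ directly from Theorem~\ref{theorem1.8}(b): $C_0 = 0$ has degree $-\infty \le -2$; $C_1 = 1$ and $C_2 = 1$ have degree $0$; $C_3 = t$ has degree $1$; $C_4 = t^2$ has degree $2$; and $C_5 = t^4+t^2+t$ has degree $4$. These match (a), (b), (c) in each residue class.

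For the inductive step, note that $n+6$ and $n$ lie in the same residue class mod $6$, while $n+5$ is in the ``adjacent'' class. I would estimate the three summands of
\[
C_{n+6} = C_{n+5} + (t^6+t^5+t^2+t)C_n + t^n(t^2+t)
\]
separately, tracking the degree of each and, when needed, the coefficient of the critical top power. The last summand contributes degree $n+2$, which is always strictly below the target and is thus irrelevant to the leading behavior. The middle summand has degree at most $\deg C_n + 6$, and by induction this equals $n+5$, $n+4$, or is $\le n+4$ according as $n \equiv 1,5$; $3,4$; or $0,2$ mod $6$. The first summand $C_{n+5}$ has degree determined by induction from the class of $n+5$. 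A case-by-case comparison in each of the six residues then gives both the claimed upper bound in (c) and the claimed equalities in (a) and (b): in the residues $1,3,4,5$ exactly one of $C_{n+5}$ and $(t^6+\cdots)C_n$ attains the target degree while the other sits strictly below it, so no cancellation can occur; in the residues $0,2$ both relevant terms have degree $\le n+4 = (n+6)-2$, giving (c).

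The only point that requires a moment of attention is ensuring that, in the residues where we claim equality, the top term is not cancelled. This is automatic because in each such case the unique summand achieving the target degree has a single monomial of that degree (the leading term of $(t^6+t^5+t^2+t)C_n$ comes solely from $t^6$ times the leading term of $C_n$), so no cancellation from the other summand is possible. I expect no substantive obstacle; the proof is a bookkeeping exercise in the recurrence.
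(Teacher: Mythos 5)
Your proposal is correct and is essentially the paper's own argument: the same six base cases from Theorem~\ref{theorem1.8}(b), followed by the same induction on the recurrence in which the three summands of $C_{n+6}$ are bounded in degree residue class by residue class, with the middle term $(t^{6}+t^{5}+t^{2}+t)C_{n}$ supplying the uncancelled top-degree term in the classes where equality is claimed. Nothing further is needed.
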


\begin{proof}
For $n\le 5$ we use Theorem \ref{theorem1.8}(b).  To show, by induction, that the results hold for $n+6$, look at (a) of Theorem \ref{theorem1.8}.  The induction assumption tells us that the first term on the right has degree $\le n+4$, and the same is true for the last term. When $n\equiv 1$ or $5\mod{6}$, the middle term, by the induction assumption, has degree $6+(n-1)=n+5$, so $C_{n+6}$ has degree $n+5$. When $n\equiv 0$ or $2\mod{6}$, the middle term has degree $\le 6+(n-2)=n+4$, so degree $C_{n+6}\le n+4$.  Finally when $n\equiv 3$ or $4\mod{6}$, the first and last terms on the right have degree $\le n+3$ while the middle term has degree $6+(n-2)=n+4$. So $C_{n+6}$ has degree $n+4$.
\qed
\end{proof}

\begin{lemma}
\label{lemma1.10}
If $C_{n}$ is a $Z/2$-linear combination of $C_{k}$, $k<n$, then $n\equiv 0$ or $2\mod{6}$.
\end{lemma}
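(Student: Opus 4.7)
The plan is to prove the contrapositive: if $n \not\equiv 0$ and $n \not\equiv 2 \mod{6}$, then $C_n$ fails to be a $Z/2$-linear combination of $C_k$, $k<n$. The tool is nothing more than degree comparison using Lemma~\ref{lemma1.9}. Any $Z/2$-linear combination of the $C_k$ with $k<n$ has degree at most $\max_{k<n}\deg C_k$, so it suffices to verify that $\deg C_n$ strictly exceeds this maximum.

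When $n\equiv 1$ or $5 \mod{6}$, Lemma~\ref{lemma1.9}(a) gives $\deg C_n=n-1$, and the uniform bound $\deg C_k\leq k-1$ implied by all three parts of the lemma yields $\deg C_k\leq n-2$ for every $k<n$. The comparison is immediate.

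When $n\equiv 3$ or $4 \mod{6}$, $\deg C_n=n-2$, so the goal is $\deg C_k\leq n-3$ for every $k<n$. For $k\leq n-3$ this follows automatically from $\deg C_k\leq k-1\leq n-4$, so only $k=n-1$ and $k=n-2$ need individual attention. A short case check on $n\mod{6}$ handles them: for $n\equiv 3$ we have $n-1\equiv 2$ and $n-2\equiv 1$, so parts (c) and (a) give $\deg C_{n-1}\leq n-3$ and $\deg C_{n-2}=n-3$; for $n\equiv 4$ we have $n-1\equiv 3$ and $n-2\equiv 2$, so parts (b) and (c) give $\deg C_{n-1}=n-3$ and $\deg C_{n-2}\leq n-4$.

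There is no real obstacle; the content is entirely in Lemma~\ref{lemma1.9}, which the paper has already proved by induction off the recurrence of Theorem~\ref{theorem1.8}(a). The remaining step is a small residue-mod-$6$ bookkeeping exercise.
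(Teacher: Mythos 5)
Your proof is correct and follows essentially the same route as the paper: both arguments are pure degree comparisons via Lemma~\ref{lemma1.9}, splitting on $n \bmod 6$. The only difference is that the paper simply asserts that each $C_k$, $k<n$, has degree $<n-2$ in the $n\equiv 3,4$ case, whereas you explicitly carry out the residue check for $k=n-1$ and $k=n-2$; that check is exactly the justification the paper leaves implicit.
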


\begin{proof}
If $n\equiv 1$ or $5\mod{6}$, then $C_{n}$ has degree $n-1$, while each $C_{k}$, $k<n$, has degree $<n-1$. Similarly when $n\equiv 3$ or $4\mod{6}$, $C_{n}$ has degree $n-2$, while each $C_{k}$, $k<n$, has degree $<n-2$.
\qed
\end{proof}

\section{A key property of the \bm{$C_{n}$}}
\label{section2}

We'll prove a converse, Theorem \ref{theorem2.11}, to Lemma \ref{lemma1.10}; if $n\equiv 0$ or $2\mod{6}$, then $C_{n}$ is a $Z/2$-linear combination of $C_{k}$, $k<n$. (This is one of several related conjectures about recurrences found in \cite{1}. Peter M\"{u}ller, in a short computer calculation, showed it to be true for $n<10,000$.)

\begin{lemma}
\label{lemma2.1}
$U$ takes $F+G$, $(F+G)^{3}$ and $(F+G)^{5}$ to $F+G$, $(F+G)^{3}$ and $(F+G)^{5}+F$.
\end{lemma}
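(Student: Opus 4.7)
The plan is to verify each of the three assertions by a direct computation, exploiting the semi-linearity of $U$ together with the explicit action on $1,r,\ldots,r^5$ from Definition \ref{def1.3}, the squaring identity of Lemma \ref{lemma1.4}, and the recurrence of Lemma \ref{lemma1.5}(a). The first step is to rewrite the three inputs in terms of powers of $r$: since $F+G=r^2+r$ and the characteristic is $2$, we have $(F+G)^3 = r^6+r^5+r^4+r^3$ and $(F+G)^5 = r^{10}+r^9+r^6+r^5$ (using $\binom{5}{k}\bmod 2 = 1,1,0,0,1,1$). So all three computations reduce to evaluating $U$ on powers $r^n$ with $n\le 10$.

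Next I would tabulate the needed values $U(r^n)$. For $n\le 5$ these are given directly by Definition \ref{def1.3}. For $n=6,7,8,9,10$ I would use Lemma \ref{lemma1.5}(a), $U(r^{n+6})=U(r^{n+5})+(r^6+r^5+r^2+r)U(r^n)$, starting from $n=0,1,2,3,4$; this gives each $U(r^n)$ explicitly as a polynomial in $r$. Lemma \ref{lemma1.4} provides a useful cross-check for the even cases $n=6,8,10$, since $U(r^{2m}) = (U(r^m))^2$.

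Having this table, the assertion $U(F+G)=F+G$ is immediate from $U(r^2+r)=r^2+r$. For $(F+G)^3$ I would sum $U(r^6)+U(r^5)+U(r^4)+U(r^3)$ and observe that all terms of degree $\le 2$ cancel, leaving exactly $r^6+r^5+r^4+r^3=(F+G)^3$. For $(F+G)^5$ I would sum $U(r^{10})+U(r^9)+U(r^6)+U(r^5)$, simplify, and then recognize the discrepancy from $(F+G)^5$ as $r^6+r^5+r^2+r$, which is precisely $r(r+1)^5=F$ (again using $(r+1)^5 = r^5+r^4+r+1$ in characteristic $2$).

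There is really no conceptual obstacle here; the only risk is a bookkeeping slip in the expansion $(r^6+r^5+r^2+r)\cdot U(r^n)$ for $n=3,4$, which feeds $U(r^9)$ and $U(r^{10})$. I would therefore do those two expansions carefully and confirm $U(r^{10})=(U(r^5))^2$ using Lemma \ref{lemma1.4} as an independent check before assembling the final sum for $U((F+G)^5)$.
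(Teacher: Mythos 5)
Your proof is correct; I checked the computations and they all come out as you predict (in particular $U(r^9)=r^9+r^8+r^6+r^5+r^2$ and $U(r^{10})=r^{10}+r^8+r^2$, and the final sums give $(F+G)^3$ and $(F+G)^5+F$ as claimed). The route differs slightly from the paper's. You expand each power of $F+G$ into monomials $r^n$ with $n$ up to $10$ and then build a table of $U(r^n)$ for $6\le n\le 10$ via the recurrence of Lemma \ref{lemma1.5}(a), with Lemma \ref{lemma1.4} as a cross-check on the even entries. The paper instead never leaves the range $n\le 5$: it rewrites $(F+G)^3=G+r^4+r^3$ and $(F+G)^5=(r^4+1)G$, i.e.\ expresses each input in terms of the $Z/2[G]$-basis $1,r,\dots,r^5$, and then applies semi-linearity ($U(Gf)=FU(f)$) together with the defining values of $U$ on that basis. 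The paper's version is a two-line computation per case; yours costs five applications of the recurrence but is entirely mechanical and self-checking. Both are sound; the only thing I would tighten in your write-up is the remark for the $(F+G)^3$ case that ``all terms of degree $\le 2$ cancel'' --- what actually happens is that the two $r^2$'s and two $r$'s cancel while one of the three $r^4$'s survives, which is consistent with your stated answer but worth saying precisely.
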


\begin{proof}
Since $F+G=r^{2}+r$, the first result holds. Also, $U(F+G)^{3}=U(r^{6}+r^{5}+r^{4}+r^{3})=U(G+r^{4}+r^{3})= F+r^{4}+r^{3}+r^{2}+r=r^{6}+r^{5}+r^{4}+r^{3}$. Finally, $(F+G)^{5}=(r^{2}+r)^{5}=(r^{4}+1)G$; $U$ takes this to $r^{4}F+F=(r^{2}+r)^{5}+F$.
\qed
\end{proof}

\begin{lemma}
\label{lemma2.2}
$U$ takes $1,F,F^{2},F^{3},F^{4},F^{5}$ to $1,G,G^{2},G^{3},G^{4},G^{5}+F$.
\end{lemma}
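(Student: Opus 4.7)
The plan is to combine three ingredients already at hand: the defining semi-linearity $U(Gf) = F U(f)$, the squaring identity $U(f^2) = (U(f))^2$ from Lemma \ref{lemma1.4}, and the values of $U$ on $(F+G)^k$ for $k=1,3,5$ from Lemma \ref{lemma2.1}. First, iterating semi-linearity on $U(1)=1$ gives $U(G^k) = F^k$ for every $k\ge 0$. Next, $U(F+G) = F+G$ from Lemma \ref{lemma2.1} together with $U(G) = F$ immediately gives $U(F) = (F+G) + F = G$. Squaring this then yields $U(F^2) = G^2$ and $U(F^4) = G^4$ for free, so only the odd powers $F^3$ and $F^5$ require real work.

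For $F^3$, I would expand $(F+G)^3 = F^3 + F^2 G + F G^2 + G^3$ in characteristic $2$. Applying $U$ term by term, using semi-linearity to compute $U(F^2 G) = F \cdot U(F^2) = F G^2$ and $U(F G^2) = F^2 \cdot U(F) = F^2 G$, along with $U(G^3) = F^3$, and comparing with $U((F+G)^3) = (F+G)^3$ from Lemma \ref{lemma2.1}, forces $U(F^3) = G^3$. For $F^5$ the strategy is identical: in characteristic $2$, $(F+G)^5 = F^5 + F^4 G + F G^4 + G^5$, because $\binom{5}{2}$ and $\binom{5}{3}$ are even; semi-linearity gives $U(F^4 G) = F G^4$, $U(F G^4) = F^4 G$, and $U(G^5) = F^5$; and Lemma \ref{lemma2.1} provides $U((F+G)^5) = (F+G)^5 + F$. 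Matching the two expressions yields $U(F^5) = G^5 + F$, with the extra $F$ term inherited directly from the error term in Lemma \ref{lemma2.1}.

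There is no genuine obstacle here — every value has been pinned down by one of the three tools, and the only subtlety is tracking the binomial coefficients mod $2$ in the expansion of $(F+G)^5$. The mild structural observation worth noting is that the pattern $F^k \mapsto G^k$ holds exactly on the even powers and on $F^3$, with the deviation $U(F^5) = G^5 + F$ arising precisely from the anomaly already recorded in Lemma \ref{lemma2.1} for $(F+G)^5$.
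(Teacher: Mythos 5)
Your proof is correct and follows essentially the same route as the paper: deduce $U(F)=G$ from $U(F+G)=F+G$, get the even powers by squaring (Lemma \ref{lemma1.4}), and pin down $U(F^{3})$ and $U(F^{5})$ by expanding $(F+G)^{3}$ and $(F+G)^{5}$ mod $2$, using semi-linearity to see that $U$ interchanges $F^{2}G$ with $FG^{2}$ and $F^{4}G$ with $FG^{4}$, then comparing with Lemma \ref{lemma2.1}. The paper phrases this as $F^{k}+G^{k}=(F+G)^{k}+(\text{mixed terms})$ rather than applying $U$ term by term to the expansion, but that is the same computation.
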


\begin{proof}
Since $U$ fixes $F+G$, and $U(G)=F$, $U(F)$ must be $G$. By Lemma \ref{lemma1.4}, $U(F^{2})=G^{2}$, $U(F^{4})=G^{4}$.  Since $U$ is semi-linear it interchanges $F^{2}G$ and $FG^{2}$, as well as $F^{4}G$ and $FG^{4}$. Now $F^{3}+G^{3}=(F+G)^{3}+F^{2}G+FG^{2}$. Lemma \ref{lemma2.1} then shows that $U$ fixes $F^{3}+G^{3}$, and since $U(G^{3})=F^{3}$, $U(F^{3})=G^{3}$. Similarly, since $F^{5}+G^{5}=(F+G)^{5}+F^{4}G+FG^{4}$, Lemma \ref{lemma2.1} shows that $U$ takes $F^{5}+G^{5}$ to $F^{5}+G^{5}+F$, and since $U(G^{5})=F^{5}$, $U(F^{5})=G^{5}+F$.
\qed
\end{proof}

\begin{lemma}
\label{lemma2.3}
Let $\alpha$ be the $Z/2$-linear isomorphism $Z/2[F]\rightarrow Z/2[G]$ with $\alpha(F^{n})=G^{n}$. Then if $P_{n}$ is either $\alpha(F^{n})=G^{n}$ or $U(F^{n})$, 
\[
(\star)\qquad
P_{n+6}+F^{2}P_{n+4}+F^{4}P_{n+2}+F^{6}P_{n}+FP_{n+1}=0.
\]
\end{lemma}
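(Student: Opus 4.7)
The plan is to verify $(\star)$ in both cases by reducing everything to the identity $(F+G)^{6}+FG=0$ from Definition \ref{def1.1}.

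For the case $P_{n}=G^{n}$, I would factor $G^{n}$ out of the left side of $(\star)$ and note that what remains is
\[
G^{6}+F^{2}G^{4}+F^{4}G^{2}+F^{6}+FG = (F+G)^{6}+FG,
\]
which is $0$ by Definition \ref{def1.1}. So $(\star)$ holds on the nose.

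For the case $P_{n}=U(F^{n})$, the key move is to use semi-linearity in reverse: since $U(Gf)=F\,U(f)$, we have $F^{k}\,U(F^{n+6-k})=U(G^{k}F^{n+6-k})$ for each $k$. Applying this to every term of $(\star)$ and then pulling $U$ out gives
\[
U\!\left(F^{n+6}+G^{2}F^{n+4}+G^{4}F^{n+2}+G^{6}F^{n}+GF^{n+1}\right) = U\!\left(F^{n}\bigl((F+G)^{6}+FG\bigr)\right),
\]
and the inner factor vanishes by the same identity.

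Since the identity $(F+G)^{6}+FG=0$ is already recorded in Definition \ref{def1.1} and semi-linearity is built into the definition of $U$, I do not anticipate any real obstacle here; the only subtlety is recognizing that $(\star)$ is a disguised form of that identity, multiplied by $G^{n}$ in one case and hit with $U$ after multiplication by $F^{n}$ in the other. That shared structural origin is presumably why the authors pair the two choices of $P_{n}$ into a single lemma.
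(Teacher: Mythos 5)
Your proposal is correct and matches the paper's proof essentially verbatim: the paper likewise multiplies the identity $(F+G)^{6}+FG=0$ by $G^{n}$ to get the first case, and multiplies by $F^{n}$ and applies the semi-linear $U$ to get the second. The only cosmetic difference is that you run the computation from $(\star)$ backwards to the identity rather than forwards from it.
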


\begin{proof}
As we saw in Definition \ref{def1.1}, $(F+G)^{6}+FG=0$. Multiplying by $G^{n}$ and expanding we get $(\star)$ with $P_{n}=G^{n}=\alpha(F^{n})$. Multiplying instead by $F^{n}$, and applying the semi-linear operator $U$, we get $(\star)$ with $P_{n}=U(F^{n})$.
\qed
\end{proof}

\begin{definition}
\label{def2.4}
For $f$ in $Z/2[F]$, $T(f)=U(f)+\alpha(f)$.
\end{definition}

\begin{lemma}
\label{lemma2.5}\hspace{2em}\\
\vspace{-2ex}
\begin{enumerate}
\item[(a)] $T$ takes $1,F,F^{2},F^{3},F^{4}$ and $F^{5}$ to $0,0,0,0,0$ and $F$.
\item[(b)] $T$ stabilizes $Z/2[F]$. In fact, $T(F^{n})$ is a sum of $F^{k}$ with each $k\equiv n\mod{2}$, and $\le n-4$.
\end{enumerate}
\end{lemma}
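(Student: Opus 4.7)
The plan is to deduce part (a) directly from Lemma 2.2, then prove part (b) by induction on $n$, using that $T(F^n)$ satisfies the same recurrence $(\star)$ from Lemma 2.3 that $G^n$ and $U(F^n)$ individually satisfy.

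For (a), I would simply apply Lemma 2.2. Since $U(F^k) = G^k$ for $k = 0,1,2,3,4$ and $U(F^5) = G^5 + F$, while $\alpha(F^k) = G^k$ for all $k$, we get $T(F^k) = U(F^k) + \alpha(F^k) = 0$ for $k \le 4$ and $T(F^5) = F$ after the $G^5$ terms cancel in characteristic $2$.

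For (b), the key observation is that the recurrence $(\star)$ of Lemma 2.3 is $Z/2$-linear in $P_n$, and holds both for $P_n = \alpha(F^n)$ and for $P_n = U(F^n)$. Hence it holds for the sum, so
\[
T(F^{n+6}) = F^{2}\, T(F^{n+4}) + F^{4}\, T(F^{n+2}) + F^{6}\, T(F^{n}) + F\, T(F^{n+1}).
\]
I would take the cases $n=0,1,2,3,4,5$ from part (a) as the base of the induction; each trivially meets the claimed degree/parity condition (for $n=5$ we have $T(F^5)=F^1$ with $1 \equiv 5 \pmod 2$ and $1 \le 5-4$).

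For the inductive step, assume the stated form of $T(F^k)$ for all $k < n+6$, and inspect each of the four terms on the right. In $F^{2j}\, T(F^{n+6-2j})$ for $j=1,2,3$, each monomial $F^m$ appearing in $T(F^{n+6-2j})$ satisfies $m \equiv n \pmod 2$ and $m \le n+2-2j$, so $F^{2j}\cdot F^m = F^{m+2j}$ has exponent $\equiv n+6 \pmod 2$ and at most $n+2$. In $F\cdot T(F^{n+1})$, each $F^m$ appearing has $m \equiv n+1 \pmod 2$ and $m \le n-3$, so $F^{m+1}$ has exponent $\equiv n+6 \pmod 2$ and at most $n-2 \le n+2$. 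This shows $T(F^{n+6})$ lies in $Z/2[F]$ and satisfies the parity and degree bounds, completing the induction.

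The whole argument is essentially bookkeeping, and there is no serious obstacle; the only point requiring a bit of care is verifying that the shift by $F$ (not $F^2$) in the last term of $(\star)$ preserves the parity condition, which works precisely because $T(F^{n+1})$ has opposite parity to $n$ by induction, and the extra factor of $F$ restores the correct parity.
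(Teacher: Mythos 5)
Your proposal is correct and follows the paper's own argument exactly: part (a) is read off from Lemma 2.2, and part (b) is the induction via the recurrence $(\star)$ of Lemma 2.3 applied to $P_n = T(F^n)$, whose bookkeeping you have simply written out in more detail than the paper does.
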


\begin{proof}
(a) is immediate from Lemma \ref{lemma2.2}, and in particular the second assertion in (b) holds for $n\le 5$. Now let $P_{n}=T(F^{n})$. By Lemma \ref{lemma2.3} the $P_{n}$ satisfy the recursion $(\star)$ above. An induction on $n$ completes the proof of (b).
\qed
\end{proof}

\begin{lemma}
\label{lemma2.6}
Let $u_{0},u_{1},u_{2},u_{4},u_{5}$ be $F+G,(F+G)^{3}+G,G,(F+G)^{2}G,(F+G)^{4}G+(F+G)FG$. Then the $u_{i}$ are linearly independent over $Z/2[G]$ and $u_{i}=(r^{2}+r)g(r^{2})$ for some $g$ of degree $i$.
\end{lemma}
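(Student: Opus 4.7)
The plan is to verify two claims: (a) each $u_i$ has the form $(r^2+r)g_i(r^2)$ with $\deg g_i = i$, and (b) the five $u_i$ are linearly independent over $Z/2[G]$.

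For (a), the key identities are $F+G = r^2+r$ and $G = r^5(r+1) = (r^2+r)r^4$ from Definition \ref{def1.1}, together with $(r^2+r)^6 = FG$, which let me pull $(r^2+r)$ out of each $u_i$. Concretely: $u_0 = (r^2+r)\cdot 1$; $u_1 = (r^2+r)^3 + (r^2+r)r^4 = (r^2+r)\bigl((r^2+r)^2+r^4\bigr) = (r^2+r)r^2$; $u_2 = (r^2+r)r^4$; $u_4 = (r^2+r)^3 r^4 = (r^2+r)(r^8+r^6)$; and $u_5 = (r^2+r)^5 r^4 + (r^2+r)^7 = (r^2+r)^5\bigl(r^4+(r^2+r)^2\bigr) = (r^2+r)^5 r^2 = (r^2+r)(r^{10}+r^6)$. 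Reading off, $g_0,\ldots,g_5$ equal $1,\,t,\,t^2,\,t^4+t^3,\,t^5+t^3$, of degrees $0,1,2,4,5$.

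For (b), since $\{1,r,r^2,r^3,r^4,r^5\}$ is a free $Z/2[G]$-basis of $Z/2[r]$, it suffices to display each $u_i$ in this basis and exhibit a $5 \times 5$ minor with nonzero determinant. Iterating $r^6 = r^5+G$ (or using the shortcut $r^{12} = (r^6)^2 = r^{10}+G^2$), one obtains coordinates
\[
u_0 \mapsto (0,1,1,0,0,0),\ u_1 \mapsto (0,0,0,1,1,0),\ u_2 \mapsto (G,0,0,0,0,0),
\]
\[
u_4 \mapsto (0,0,G,0,G,0),\ u_5 \mapsto (G^2,0,G,0,0,G).
\]
Dropping the $r^2$-column and reordering rows as $(u_2,u_0,u_1,u_4,u_5)$ produces a matrix which, after adding $G$ times the first row to the last, is upper triangular with diagonal $(G,1,1,G,G)$; its determinant is $G^3 \ne 0$. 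The main obstacle is bookkeeping in the reduction of $u_5$, where many $r^k G$ contributions arise and then cancel in characteristic $2$; with the shortcut $r^{12}=r^{10}+G^2$ in hand the reduction is brief, and the remainder of the argument is a one-line determinant calculation.
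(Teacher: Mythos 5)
Your proposal is correct; I checked the coordinate vectors for $u_0,\ldots,u_5$ in the basis $\{1,r,r^2,r^3,r^4,r^5\}$ and the determinant $G^3$ of your chosen minor, and the degree computations giving $g_0,\ldots,g_5 = 1,\,t,\,t^2,\,t^4+t^3,\,t^5+t^3$ all agree with the paper. The degree half of your argument is essentially the paper's: both pull a factor $r^2+r$ out of each $u_i$ using $F+G=r^2+r$, $G=(r^2+r)r^4$ and $FG=(F+G)^6$, and read off $g_i$. Where you genuinely diverge is the linear independence. The paper disposes of it in one sentence by viewing each $u_i$ as a polynomial in $F$ over $Z/2[G]$: the leading $F$-degrees of $u_0,u_1,u_2,u_4,u_5$ are $1,3,0,2,4$, all distinct, so independence follows from the linear independence of $1,F,F^2,F^3,F^4$ over $Z/2[G]$ (a fact the paper asserts rather than proves; it holds because $Z/2(r)$ has degree $6$ over $Z/2(G)$ and is generated by $F$, since $r=(G+s^3+s^2)/s$ with $s=F+G$). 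Your route instead works in the explicitly given free basis $\{1,r,\ldots,r^5\}$ of $Z/2[r]$ over $Z/2[G]$ and exhibits a nonvanishing $5\times 5$ minor. This costs more bookkeeping (correctly handled via $r^6=r^5+G$ and $r^{12}=r^{10}+G^2$) but is fully self-contained, needing only that $Z/2[G]$ is a domain so that a nonzero minor certifies independence; the paper's argument is shorter but leans on an unproved auxiliary independence statement.
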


\begin{proof}
Since $1,F,F^{2},F^{3}$ and $F^{4}$ are linearly independent over $Z/2[G]$, the first assertion holds. The $g$ corresponding to $u_{0}$ and $u_{2}$ are evidently $1$ and $t^{2}$. Since $(F+G)^{2}=r^{4}+r^{2}$, the $g$ corresponding to $u_{1}$ is $(t^{2}+t)+t^{2}=t$. Since $(F+G)G=r^{8}+r^{6}$, the $g$ corresponding to $u_{4}$ is $t^{4}+t^{3}$. Since $G(F+G)^{3}+FG=(r^{6}+r^{5})(r^{4}+r^{3}+r^{2}+r)=r^{10}+r^{6}$, the $g$ corresponding to $u_{5}$ is $t^{5}+t^{3}$.
\qed
\end{proof}

We now fix $m\ge 0$.

\begin{definition}
\label{def2.7}
$L$ is the space spanned by the $u_{i}G^{2n}$ with $i\in \{0,1,2,4,5\}$ and $0\le n\le m$. $L^{*}$ consists of the $(r^{2}+r)g(r^{2})$, where $g$ in $Z/2[t]$ has degree $\le 6m+5$.
\end{definition}

\begin{lemma}
\label{lemma2.8}
$L$ has dimension $5m+5$, and $L\subset L^{*}$.
\end{lemma}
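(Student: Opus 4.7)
The plan is to handle the two assertions separately, both as direct consequences of Lemma \ref{lemma2.6} together with the observation that $G^{2}$ lies in $Z/2[r^{2}]$.

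First, for the dimension count, I would show that the spanning set $\{u_{i}G^{2n} : i\in\{0,1,2,4,5\},\ 0\le n\le m\}$, which has $5(m+1)=5m+5$ elements, is linearly independent over $Z/2$. Suppose $\sum_{i,n} c_{i,n}\, u_{i}G^{2n}=0$ with $c_{i,n}\in Z/2$. Regrouping as $\sum_{i} u_{i}\bigl(\sum_{n} c_{i,n} G^{2n}\bigr)=0$, the polynomials $h_{i}(G):=\sum_{n} c_{i,n}G^{2n}\in Z/2[G]$ give a $Z/2[G]$-linear relation among $u_{0},u_{1},u_{2},u_{4},u_{5}$. Lemma \ref{lemma2.6} says these five elements are linearly independent over $Z/2[G]$, so each $h_{i}(G)=0$, hence all $c_{i,n}=0$.

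Next, for the inclusion $L\subset L^{*}$, I would compute $G^{2}$ as a polynomial in $r^{2}$. Since $G=r^{5}(r+1)=r^{6}+r^{5}$, in characteristic 2 we have $G^{2}=r^{12}+r^{10}=(r^{2})^{6}+(r^{2})^{5}$, i.e.\ $G^{2}=t^{6}+t^{5}$ when we write $t$ for $r^{2}$. Consequently $G^{2n}$ is a polynomial in $r^{2}$ of degree exactly $6n$ in $t$. By Lemma \ref{lemma2.6} each $u_{i}$ (for $i\in\{0,1,2,4,5\}$) equals $(r^{2}+r)\,g_{i}(r^{2})$ with $\deg g_{i}=i$. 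Multiplying, $u_{i}G^{2n}=(r^{2}+r)\,g_{i}(r^{2})\cdot (t^{6}+t^{5})^{n}\big|_{t=r^{2}}$, which is $(r^{2}+r)\,g(r^{2})$ for a polynomial $g\in Z/2[t]$ of degree $i+6n \le 5+6m=6m+5$. This is precisely the condition for membership in $L^{*}$, and since $L^{*}$ is a $Z/2$-subspace, every element of $L$ lies in $L^{*}$.

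There is no real obstacle here: both statements are essentially bookkeeping on top of Lemma \ref{lemma2.6}. The only point that requires a second's thought is identifying $G^{2}$ as a degree-$6$ polynomial in $t=r^{2}$, which drives the degree bound $i+6n\le 6m+5$ and will later explain why $L^{*}$ was defined with the cutoff $6m+5$ rather than some other value. The dimension and inclusion together will presumably be used in subsequent lemmas to compare $L$ with $L^{*}$ (whose dimension is also $5m+5$, since $(r^{2}+r)g(r^{2})$ is determined by $g$ of degree $\le 6m+5$, giving $6m+6$ coefficients, but one expects further constraints or an equality argument to come).
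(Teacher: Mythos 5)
Your proof is correct and takes essentially the same route as the paper: the $Z/2[G]$-linear independence from Lemma~\ref{lemma2.6} gives the dimension count, and the identity $G^{2}=r^{12}+r^{10}=(r^{2})^{6}+(r^{2})^{5}$ combined with $\deg g_{i}=i$ gives the degree bound $i+6n\le 6m+5$ needed for $L\subset L^{*}$. (One stray remark in your closing aside: $L^{*}$ has dimension $6m+6$, not $5m+5$, but this plays no role in your argument.)
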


\begin{proof}
The linear independence of the $u_{i}$ over $Z/2[G]$ gives the first result. Since $G^{2n}=(r^{12}+r^{10})^{n}$, and $n\le m$, the last part of Lemma \ref{lemma2.6} shows that $L\subset L^{*}$.
\qed
\end{proof}

\begin{remark*}{Remark}
Let $\modd$ consist of all elements of $Z/2[r]$ of the form $(r^{2}+r)g(r^{2})$, $g$ in $Z/2[t]$. Lemma \ref{lemma1.5}(b) shows that $U$ stabilizes $\modd$. Also, if the $A_{n}$ are as in Lemma \ref{lemma1.5}, then Lemma \ref{lemma1.6} and an induction show that the degree of $A_{n}$ is $\le n$, and it follows that $U$ stabilizes $L^{*}$.  (But when $m\ge 2$, $U$ does \emph{not} stabilize $L$. For $G^{5}=G^{4}u_{2}$ is in $L$, but $U(G^{5})=F^{5}$ is not even a $Z/2[G]$-linear combination of $u_{0},u_{1},u_{2},u_{4}$ and $u_{5}$.)
\end{remark*}

\begin{lemma}
\label{lemma2.9}
For $0\le i\le 4$, $(U+I)^{2}=U^{2}+I$ maps $F^{i}G^{k}$ to $F^{i}T(F^{k})$.
\end{lemma}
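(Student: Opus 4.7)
The plan is to compute $U^{2}(F^{i}G^{k})$ explicitly for $0\le i\le 4$ by applying $U$ twice and using the semi-linearity relation $U(Gf)=F\cdot U(f)$ together with Lemma \ref{lemma2.2}, then compare with the definition of $T$.

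First, since $U$ is $Z/2$-linear, working in characteristic $2$ we have $(U+I)^{2}=U^{2}+2U+I=U^{2}+I$, so it suffices to analyze $U^{2}$. For the first application of $U$, rewrite $F^{i}G^{k}=G^{k}\cdot F^{i}$ and iterate semi-linearity to obtain $U(F^{i}G^{k})=F^{k}\cdot U(F^{i})$. Here the hypothesis $i\le 4$ is crucial: Lemma \ref{lemma2.2} gives $U(F^{i})=G^{i}$ for $0\le i\le 4$ (the anomalous $U(F^{5})=G^{5}+F$ would spoil things). So $U(F^{i}G^{k})=F^{k}G^{i}$.

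For the second application, apply semi-linearity in the same way to the reshuffled expression: $U(F^{k}G^{i})=U(G^{i}\cdot F^{k})=F^{i}\cdot U(F^{k})$. Therefore $U^{2}(F^{i}G^{k})=F^{i}U(F^{k})$, and
\[
(U^{2}+I)(F^{i}G^{k})=F^{i}U(F^{k})+F^{i}G^{k}=F^{i}\bigl(U(F^{k})+\alpha(F^{k})\bigr)=F^{i}T(F^{k}),
\]
where we used $\alpha(F^{k})=G^{k}$ from Lemma \ref{lemma2.3} and the definition of $T$ from Definition \ref{def2.4}.

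There really is no main obstacle; the only subtle point is remembering that $U^{2}$ behaves well on the products $F^{i}G^{k}$ precisely because semi-linearity lets us strip off $G^{k}$ before the first application and $G^{i}$ before the second, and the range $0\le i\le 4$ sidesteps the exceptional image $U(F^{5})=G^{5}+F$ that would otherwise contribute an extra $F^{k+1}$ term.
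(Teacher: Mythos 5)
Your proof is correct and follows essentially the same route as the paper: strip off $G^{k}$ by semi-linearity, use $U(F^{i})=G^{i}$ for $i\le 4$ from Lemma \ref{lemma2.2}, apply semi-linearity again to get $U^{2}(F^{i}G^{k})=F^{i}U(F^{k})$, and add the identity. Your explicit remark about why $i\le 4$ is needed (to avoid $U(F^{5})=G^{5}+F$) is a correct elaboration of the paper's terser ``since $i\le 4$''.
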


\begin{proof}
$U(F^{i}G^{k})=F^{k}U(F^{i})$; since $i\le 4$ this is $F^{k}G^{i}$. Then $U^{2}(F^{i}G^{k})=F^{i}U(F^{k})=F^{i}(G^{k}+T(F^{i}))$, and the result follows.
\qed
\end{proof}

\begin{theorem}
\label{theorem2.10}
Let $K_{m}$ be the kernel of $U+I:L^{*}\rightarrow L^{*}$. (The remark above shows that $U+I$ stabilizes $L^{*}$.) Then the dimension of $K_{m}$ is $\ge 2m+2$.
\end{theorem}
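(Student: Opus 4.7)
My plan is to produce $2m+2$ linearly independent kernel elements by restricting $U+I$ to the smaller subspace $L \subseteq L^{*}$ and bounding its rank. Concretely, I aim to show that the image of $(U+I)|_{L}$ lies in a $(3m+3)$-dimensional subspace $V \subseteq L^{*}$ consisting of odd powers of $\tau := F+G$; since $\dim L = 5m+5$, rank--nullity then gives $\dim\ker((U+I)|_{L}) \geq 2m+2$, and this kernel is contained in $K_{m}$.

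The algebraic setup rests on the identity $FG = (F+G)^{6} = \tau^{6}$ from Definition \ref{def1.1}: $F$ and $G$ are the roots of $X^{2}+\tau X+\tau^{6}$ over $Z/2[\tau]$, so each power sum $p_{k} := F^{k}+G^{k}$ is a polynomial in $\tau$ satisfying the recursion $p_{k} = \tau\,p_{k-1} + \tau^{6}\,p_{k-2}$ with $p_{0}=0$, $p_{1}=\tau$. A routine induction gives $\deg_{\tau}p_{k}\leq 3k-2$, and since we work in characteristic $2$ we also have $p_{2n}=p_{n}^{2}$, so each $p_{k}$ contains only $\tau^{j}$ with $j \equiv k \mod{2}$. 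Let $V$ be the $Z/2$-span of $\tau,\tau^{3},\ldots,\tau^{6m+5}$: writing $\tau^{2j+1} = (r^{2}+r)(r^{4}+r^{2})^{j} = (r^{2}+r)g(r^{2})$ with $g=(t^{2}+t)^{j}$ of degree $2j$, each $\tau^{2j+1}$ lies in $L^{*}$ precisely when $j \leq 3m+2$, giving $\dim V = 3m+3$.

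The main calculation applies $U+I$ to each basis element $u_{i}G^{2n}$ of $L$, with $i \in \{0,1,2,4,5\}$ and $0 \leq n \leq m$. Using semi-linearity $U(G^{2n}f) = F^{2n}U(f)$, the fixed-point identities $U(\tau^{c})=\tau^{c}$ for $c \in \{1,2,3,4\}$ (from Lemmas \ref{lemma2.1} and \ref{lemma2.2}), and the consequence $U(FG)=FG$ (which in turn forces $U(\tau^{7})=\tau^{7}$ via $\tau^{7}=(F+G)FG$), direct expansion gives
\begin{align*}
(U+I)(u_{0}G^{2n}) &= \tau\,p_{2n}, & (U+I)(u_{1}G^{2n}) &= \tau^{3}p_{2n} + p_{2n+1},\\
(U+I)(u_{2}G^{2n}) &= p_{2n+1}, & (U+I)(u_{4}G^{2n}) &= \tau^{2}p_{2n+1},\\
(U+I)(u_{5}G^{2n}) &= \tau^{4}p_{2n+1} + \tau^{7}p_{2n}. & &
\end{align*}
Every output is a sum of $\tau^{j}$ with $j$ odd, and the bound $\deg_{\tau}p_{k}\leq 3k-2$ together with $0 \leq n \leq m$ keeps $\deg_{\tau}\leq 6m+5$ in every case (the extremal $\tau^{6m+5}$ arises as the top term of $\tau^{4}p_{2m+1}$ in the last row). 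Hence $(U+I)(L) \subseteq V$ and the conclusion follows.

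The main technical obstacle is the $i=5$ formula together with the need for the sharp bound $\deg_{\tau}p_{2m}\leq 6m-2$: the $\tau^{7}p_{2n}$ term could in principle reach $\tau$-degree $7+\deg_{\tau}p_{2n}$, and only this strict bound (rather than the a priori $6m$) keeps $(U+I)(u_{5}G^{2m})$ inside $V$.
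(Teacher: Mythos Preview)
Your proof is correct, but it follows a genuinely different route from the paper's.

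The paper works with $(U+I)^{2}$ rather than $U+I$. Using the auxiliary operator $T=U+\alpha$ of Definition~\ref{def2.4} and Lemmas \ref{lemma2.3}--\ref{lemma2.5} and \ref{lemma2.9}, it shows that $(U+I)^{2}$ sends every $u_{i}G^{2n}\in L$ into the span of the $F^{k}$ with $k$ odd and $k\le 2m+1$, a space of dimension $m+1$. Thus $\dim\ker\bigl((U+I)^{2}|_{L}\bigr)\ge 4m+4$, and since $\ker(U+I)^{2}$ on $L^{*}$ has dimension at most $2\dim K_{m}$, one gets $\dim K_{m}\ge 2m+2$.

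Your argument bypasses the squaring and the machinery of Lemmas \ref{lemma2.3}--\ref{lemma2.5}, \ref{lemma2.9} entirely. By exploiting that $U$ fixes $\tau^{c}$ for $c\in\{1,2,3,4,7\}$ and that $p_{k}=F^{k}+G^{k}$ lies in $Z/2[\tau]$ with $\deg_{\tau}p_{k}\le 3k-2$ and the correct parity, you place $(U+I)(L)$ directly inside the $(3m+3)$-dimensional space of odd $\tau$-powers up to $\tau^{6m+5}$, and rank--nullity on $L$ finishes. This is more elementary and shorter. It also yields slightly more: you obtain $\dim\bigl(K_{m}\cap L\bigr)\ge 2m+2$, so once Theorem~\ref{theorem2.11} pins down $\dim K_{m}=2m+2$ you get $K_{m}\subset L$ immediately, which gives Theorem~\ref{theorem4.3} without passing through Lemma~\ref{lemma4.2}. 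The paper's approach, on the other hand, is what makes Lemma~\ref{lemma4.2} (equality of the $(U+I)^{2}$-kernels on $L$ and $L^{*}$) available as stated.
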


\begin{proof}
Each $u_{i}G^{2n}$ with $0\le n\le m$ is a sum of $F^{i}G^{k}$ where $i\le 4$ and $i+k$ is both odd and $\le 2m+5$; see the definition of the $u_{i}$.  By Lemma \ref{lemma2.9} the image of any of these elements under $(U+I)^{2}$ is a sum of $F^{i}T(F^{k})$ with $i+k$ odd and $\le 2m+5$. By Lemma \ref{lemma2.5}(b), each such $F^{i}T(F^{k})$ is in the space spanned by the $F^{n}$ with $n$ odd and $\le 2m+1$. It follows that the image of $L$ under $(U+I)^{2}$ has dimension $\le m+1$, and that the dimension of the kernel is $\ge (5m+5)-(m+1)=4m+4$. Since $L\subset L^{*}$, $(U+I)^{2}:L^{*}\rightarrow L^{*}$ has a kernel whose dimension is $\ge 4m+4$, and the dimension of $K_{m}$ is $\ge 2m+2$.
\qed
\end{proof}

\begin{theorem}
\label{theorem2.11}
If $n=6m$ or $6m+2$, $C_{n}$ is a $Z/2$-linear combination of $C_{k}$, $k<n$.
\end{theorem}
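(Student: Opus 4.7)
The plan is to prove Theorem \ref{theorem2.11} by a dimension count inside $L^{*}$. By Theorem \ref{theorem1.8}(c), the operator $U+I$ sends $(r^{2}+r)r^{2k}$ to $(r^{2}+r)C_{k}(r^{2})$, so to show that $C_{n}$ is a $Z/2$-linear combination of earlier $C_{k}$'s it suffices to write $(r^{2}+r)C_{n}(r^{2})$ as a combination of $(r^{2}+r)C_{k}(r^{2})$ with $k<n$; all these vectors live in the image of $(U+I):L^{*}\rightarrow L^{*}$.

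First I would set up the global dimension bound. Fix $m\ge 0$; then $L^{*}$ has dimension $6m+6$, and Theorem \ref{theorem2.10} gives $\dim K_{m}\ge 2m+2$, so the image of $U+I$ on $L^{*}$ has dimension $\le 4m+4$.

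Next I would exhibit an explicit basis of this image. The image is spanned by the $6m+6$ elements $(r^{2}+r)C_{k}(r^{2})$ with $0\le k\le 6m+5$. Restrict attention to the $4m+4$ indices $k$ in this range with $k\not\equiv 0,2\mod{6}$. By Lemma \ref{lemma1.9} the corresponding $C_{k}$ have degree $k-1$ (if $k\equiv 1$ or $5\mod{6}$) or $k-2$ (if $k\equiv 3$ or $4\mod{6}$); these degrees are pairwise distinct, so the $C_{k}(r^{2})$, and hence the $(r^{2}+r)C_{k}(r^{2})$, are linearly independent. Combined with the upper bound above, these $4m+4$ elements form a basis of the image.

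Finally, for $n=6m$ or $n=6m+2$, expand $(r^{2}+r)C_{n}(r^{2})$ in this basis to obtain a representation $C_{n}=\sum a_{k}C_{k}$ with $a_{k}\in Z/2$, the sum running over $k\le 6m+5$ with $k\not\equiv 0,2\mod{6}$. A second application of Lemma \ref{lemma1.9} then prunes the tail: when $n=6m+2$ we have degree $C_{n}\le 6m$, which rules out the indices $k=6m+3,6m+4,6m+5$ (whose $C_{k}$ have degrees $6m+1,6m+2,6m+4$); when $n=6m$ we have degree $C_{n}\le 6m-2$, which rules out $k=6m+1,6m+3,6m+4,6m+5$. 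So $a_{k}=0$ for all $k>n$, giving the required relation. The main obstacle has already been absorbed into Theorem \ref{theorem2.10}; what makes this argument work is the numerical coincidence that the lower bound $\dim K_{m}\ge 2m+2$ equals exactly the number of $n\le 6m+5$ with $n\equiv 0$ or $2\mod{6}$, forcing equality of dimensions and hence the desired relations.
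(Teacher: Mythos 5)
Your argument is correct, and it rests on exactly the same key inputs as the paper's proof --- the lower bound $\dim K_{m}\ge 2m+2$ from Theorem \ref{theorem2.10}, the degree formulas of Lemma \ref{lemma1.9}, and Theorem \ref{theorem1.8}(c) --- but it runs the dimension count on the image side rather than the kernel side. The paper takes a nonzero $f=(r^{2}+r)g(r^{2})$ in $K_{m}$, uses Lemma \ref{lemma1.10} to force the leading exponent of $g$ to be $0$ or $2\bmod 6$, observes that $K_{m}$ has a basis of elements with distinct degrees drawn from only $2m+2$ admissible values, and concludes by pigeonhole that elements with leading exponents $6m$ and $6m+2$ actually occur; their leading terms then give the relation. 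You instead note that rank--nullity caps the image of $U+I$ on $L^{*}$ at dimension $4m+4$, check (correctly --- the degrees $k-1$ resp.\ $k-2$ land in distinct residue classes mod $6$ for $k\equiv 1,3,4,5$) that the $4m+4$ elements $(r^{2}+r)C_{k}(r^{2})$ with $k\not\equiv 0,2\bmod 6$ are independent and hence a basis of the image, and then expand $(r^{2}+r)C_{n}(r^{2})$ in that basis, pruning the indices $k>n$ by comparing degrees. Your version bypasses Lemma \ref{lemma1.10} and is arguably cleaner in isolation; the paper's kernel-side formulation has the advantage that the basis of $K_{m}$ with distinct degrees which it constructs is reused later (in the proof of Lemma \ref{lemma4.2} and in Corollaries \ref{corollary2.12}--\ref{corollary2.13}), so the two proofs are best seen as the two faces of one rank--nullity computation.
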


\begin{proof}
Let $L$ and $L^{*}$ be as in Definition \ref{def2.7}, and $K_{m}$ be as in Theorem \ref{theorem2.10}. Suppose $f=(r^{2}+r)g(r^{2})$ is a non-zero element of $K_{m}$. Write $g$ as $t^{j}+$ a sum of $t^{k}$ with $k<j$. Applying $U+I$ and using Theorem \ref{theorem1.8}(c) we find that $C_{j}$ is the sum of the corresponding $C_{k}$. So by Lemma \ref{lemma1.10}, $j\equiv 0$ or $2\mod{6}$. Since $j\le 6m+5$, the degree, $2j+2$, of $f$ in $r$ is $2$ or $6\bmod{12}$ and lies in $[0,12m+12]$; this restricts us to $2m+2$ possible degrees. Now $K_{m}$ admits a $Z/2$-basis of elements with distinct degrees in $r$. We've just shown that only the $2m+2$ integers in $\{2,6,14,18,\ldots,12m+2,12m+6\}$ can be degrees. Since the dimension of $K_{m}$ is $\ge 2m+2$, each of these degrees does occur, and in particular there's an $f=(r^{2}+r)g(r^{2})$ in $K_{m}$ with the degree, $n$, of $g$ equal to $6m$ (and also such an $f$ with the degree of $g$ equal to $6m+2$).  Write $g$ as $t^{n}+$ (a sum of $t^{k}$ with $k<n$).  Applying $U+I$ and using Theorem \ref{theorem1.8}(c) we find that $C_{n}$ is the sum of the corresponding $C_{k}$.
\qed
\end{proof}

\begin{corollary}
\label{corollary2.12}
Let $\varphi : Z/2[t]\rightarrow Z/2[t]$ be the $Z/2$-linear map taking $t^{k}$ to $C_{k}$. Then the kernel of $\varphi$ has a basis consisting of $g_{n}$ of degree $n$, one for each $n\equiv 0$ or $2\bmod{6}$.
\end{corollary}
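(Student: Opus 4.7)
The proof is essentially a bookkeeping exercise combining Theorem \ref{theorem2.11} with Lemma \ref{lemma1.10}, using the fact that a $Z/2$-subspace of $Z/2[t]$ admits a basis of elements with distinct degrees (i.e., a triangular basis). No new idea is required; the main thing to verify is that the set of degrees realized by elements of $\ker \varphi$ is exactly $\{n : n \equiv 0 \text{ or } 2 \bmod 6\}$.

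First I would use Theorem \ref{theorem2.11} to produce candidates. For each $n \equiv 0$ or $2 \bmod 6$, that theorem gives coefficients $c_k \in Z/2$, $k<n$, with $C_n = \sum_{k<n} c_k C_k$. Setting $g_n = t^n + \sum_{k<n} c_k t^k$ we get an element of $\ker \varphi$ of degree exactly $n$. The $g_n$ are linearly independent, since they have pairwise distinct degrees.

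Next I would show the $g_n$ span $\ker \varphi$ by strong induction on degree. Suppose $f \in \ker \varphi$ is nonzero; let $n = \deg f$, and write $f = t^n + \sum_{k<n} a_k t^k$ (scaling is harmless in characteristic $2$ once we absorb a leading coefficient). Applying $\varphi$ gives $C_n = \sum_{k<n} a_k C_k$, so by Lemma \ref{lemma1.10} we must have $n \equiv 0$ or $2 \bmod 6$. Hence $g_n$ exists, and $f + g_n \in \ker \varphi$ has degree strictly less than $n$. By induction (or by iterating), $f + g_n$ is a $Z/2$-linear combination of the $g_m$ with $m < n$, so $f$ is too.

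The only potential subtlety is making sure the induction bottoms out properly: the base case $f = 0$ is trivial, and the inductive step decreases $n$ strictly, so the process terminates. This establishes that $\{g_n : n \equiv 0 \text{ or } 2 \bmod 6\}$ is a $Z/2$-basis of $\ker \varphi$, which is the claim of the corollary.
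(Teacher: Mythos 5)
Your proof is correct and is exactly the argument the paper compresses into ``Immediate from Lemma \ref{lemma1.10} and Theorem \ref{theorem2.11}'': Theorem \ref{theorem2.11} supplies a $g_{n}\in\ker\varphi$ of each degree $n\equiv 0$ or $2\bmod 6$, and Lemma \ref{lemma1.10} plus downward induction on degree shows these span. No further comment is needed.
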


\begin{proof}
Immediate from Lemma \ref{lemma1.10} and Theorem \ref{theorem2.11}.
\qed
\end{proof}

\begin{corollary}
\label{corollary2.13}
Let $K$ be the kernel of $U+I:\modd\rightarrow\modd$. Then the $(r^{2}+r)g_{n}(r^{2})$, $g_{n}$ as in Corollary \ref{corollary2.12}, are a $Z/2$-basis of $K$.
\end{corollary}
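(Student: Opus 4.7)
The plan is to show that the corollary is a direct translation of Corollary \ref{corollary2.12} via the natural isomorphism between $\modd$ and $Z/2[t]$.

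First, I would observe that the map $\Phi: Z/2[t] \to \modd$ sending $g \mapsto (r^2+r)g(r^2)$ is a $Z/2$-linear bijection. Linearity is clear; injectivity follows because $r^2+r$ is a nonzero element of the integral domain $Z/2[r]$, and $g \mapsto g(r^2)$ is injective; surjectivity is the definition of $\modd$ recalled in the Remark after Definition \ref{def2.7}. In particular, $\Phi$ sends the monomial basis $\{t^n\}_{n\ge 0}$ of $Z/2[t]$ to the basis $\{(r^2+r)r^{2n}\}_{n\ge 0}$ of $\modd$.

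Next, I would translate the operator $U+I$ through $\Phi$. By Theorem \ref{theorem1.8}(c), $(U+I)\bigl((r^2+r)r^{2n}\bigr) = (r^2+r)C_n(r^2) = \Phi(C_n)$. Extending $Z/2$-linearly, if $g = \sum a_n t^n$ then
\[
(U+I)\bigl(\Phi(g)\bigr) = \sum a_n (r^2+r)C_n(r^2) = \Phi\Bigl(\sum a_n C_n\Bigr) = \Phi(\varphi(g)),
\]
where $\varphi$ is the map of Corollary \ref{corollary2.12}. Hence $\Phi$ intertwines $\varphi$ on $Z/2[t]$ with $U+I$ on $\modd$, so $\Phi$ restricts to a $Z/2$-linear bijection from $\ker\varphi$ onto $K$.

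By Corollary \ref{corollary2.12}, $\ker\varphi$ has a $Z/2$-basis $\{g_n\}$ indexed by $n \equiv 0$ or $2 \pmod{6}$. Applying $\Phi$ gives the basis $\{(r^2+r)g_n(r^2)\}$ of $K$, as claimed. There is no serious obstacle here; the entire content of the corollary has already been packed into Theorem \ref{theorem2.11} and Corollary \ref{corollary2.12}, and all that remains is to make explicit the dictionary between $Z/2[t]$ and $\modd$ under which $\varphi$ becomes $U+I$.
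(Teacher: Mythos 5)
Your proof is correct and is essentially the paper's own argument, spelled out in more detail: the paper's one-line proof likewise reduces the statement to the observation (via Theorem \ref{theorem1.8}(c)) that $(r^{2}+r)g(r^{2})$ lies in $K$ if and only if $\varphi(g)=0$, and then invokes Corollary \ref{corollary2.12}. Your explicit verification that $\Phi$ intertwines $\varphi$ with $U+I$ is exactly the dictionary the paper leaves implicit.
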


\begin{proof}
Theorem \ref{theorem1.8}(c) shows that $(r^{2}+r)g(r^{2})$ is in $K$ if and only if $\varphi(g)= 0$.\\
\qed
\end{proof}

\section{More about the \bm{$g_{n}$}}
\label{section3}

In Corollary \ref{corollary2.12} we introduced certain $g_{n}$ in $Z/2[t]$, $n\equiv 0$ or $2\mod{6}$. We now use the recursion for the $C_{n}$ to show that the $g_{n}$ can be chosen to satisfy certain further conditions. If $g$ is in  $Z/2[t]$, $g=O(t^{m})$ will be shorthand for ``the degree of $g$ is $\le m$''.

\begin{lemma}
\label{lemma3.1}\hspace{2em}\\
\vspace{-2ex}
\begin{enumerate}
\item[(a)] If $n\ge 24$, $C_{n}=t^{24}C_{n-24}+O(t^{n-5})$. When $n$ is even, $n-5$ can be replaced by $n-6$.
\item[(b)] If $n\ge 48$, $C_{n}=t^{48}C_{n-48}+O(t^{n-9})$.
\end{enumerate}
\end{lemma}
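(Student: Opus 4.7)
The key structural observation is that in characteristic $2$, the polynomial $D_n := C_n + t^{24} C_{n-24}$ (defined for $n \ge 24$) satisfies the \emph{homogeneous} form of the recurrence of Theorem \ref{theorem1.8}(a). Indeed, applying that recurrence at index $n$ expresses $C_{n+6}$ in terms of $C_{n+5}$, $C_n$, and a forcing term $t^n(t^2+t)$; applying it at index $n-24$ and then multiplying by $t^{24}$ expresses $t^{24} C_{n-18}$ in terms of $t^{24} C_{n-19}$, $t^{24} C_{n-24}$, and a forcing term $t^{24}\cdot t^{n-24}(t^2+t) = t^n(t^2+t)$. The two forcing terms cancel in characteristic $2$, so
\[
D_{n+6} = D_{n+5} + (t^6 + t^5 + t^2 + t)\,D_n \qquad (n \ge 24).
\]
For part (b) the same calculation with $E_n := C_n + t^{48} C_{n-48}$ produces the identical homogeneous recurrence for $n \ge 48$.

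I would then turn this into a degree bound by induction. Setting $a_n = \deg D_n - n$, the recurrence gives $a_{n+6} \le \max(a_{n+5}-1,\, a_n)$. Because $n$ and $n+6$ share parity while $n+5$ has the opposite parity, the $-1$ from the shift exactly absorbs the parity mismatch: the bounds $a_n \le -5$ for $n$ odd and $a_n \le -6$ for $n$ even are preserved (for $n+6$ odd, $a_{n+6}\le \max(-7,-5)=-5$; for $n+6$ even, $a_{n+6}\le \max(-6,-6)=-6$), which is exactly the parity-split conclusion of~(a). For $E_n$ in part~(b) the target bound $a_n \le -9$ is uniform in parity and is trivially stable under the same recurrence, since $\max(-10,-9)=-9$.

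This reduces each statement to a finite base-case check: for (a), the six polynomials $D_{24},\ldots,D_{29}$ must meet the parity-split bound; for (b), the six polynomials $E_{48},\ldots,E_{53}$ must have degree at most $n-9$. Each check is mechanical once one generates $C_0, C_1, \ldots$ from Theorem \ref{theorem1.8} (out to $n=29$ for (a), $n=53$ for (b)), forms the relevant $D$ or $E$, and reads off degrees. This is where the real work sits: the structural reduction is a one-line cancellation of forcing terms together with a brief degree induction, so the entire substance of the lemma is concentrated in verifying these twelve initial conditions. One could attempt to bootstrap (b) from (a) via the identity $E_n = D_n + t^{24} D_{n-24}$, but picking up the extra drop from $n-5$ down to $n-9$ would still require knowing the top coefficients of $D_n$ explicitly in the base-case range, so this offers no real saving.
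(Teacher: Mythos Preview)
Your argument is correct: the forcing-term cancellation for $D_n = C_n + t^{24}C_{n-24}$ is valid, the degree induction via $a_{n+6}\le\max(a_{n+5}-1,a_n)$ is sound and parity-stable exactly as you describe, and the twelve base cases, while tedious, are finite and mechanical.

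The paper takes a different and somewhat slicker route that avoids the base-case computation altogether. Instead of keeping the original six-step recurrence and passing to $D_n$, it iterates the recurrence via Frobenius: in characteristic~$2$ the relation $C_{n+6}+C_{n+5}+(t^6+t^5+t^2+t)C_n = t^n(t^2+t)$, squared twice, yields
\[
C_n = C_{n-4} + (t^{24}+t^{20}+t^8+t^4)\,C_{n-24} + t^{n-24}(t^8+t^4).
\]
Now every term on the right other than $t^{24}C_{n-24}$ can be bounded directly using the already-proved Lemma~\ref{lemma1.9} ($C_m=O(t^{m-1})$, and $O(t^{m-2})$ for $m$ even): $C_{n-4}$ is $O(t^{n-5})$, $(t^{20}+t^8+t^4)C_{n-24}$ is $O(t^{n-5})$, and the forcing term is $O(t^{n-16})$, with the parity refinement coming for free. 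Part~(b) is the eighth iterate in the same way. So the paper trades your homogeneous-recurrence-plus-base-cases for a longer-step inhomogeneous identity that feeds straight into Lemma~\ref{lemma1.9}; the payoff is that no $C_n$ beyond those in Theorem~\ref{theorem1.8}(b) ever has to be computed. Your approach, by contrast, is more transparent about \emph{why} the bound propagates (the clean $\max(a_{n+5}-1,a_n)$ inequality), at the cost of pushing the content into the initial check.
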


\begin{proof}
The recursion of Theorem \ref{theorem1.8} shows that $C_{n}=C_{n-4}+(t^{24}+t^{20}+t^{8}+t^{4})C_{n-24}+t^{n-24}(t^{8}+t^{4})$. Since $C_{m}$ is $O(t^{m-1})$ and is $O(t^{m-2})$ for even $m$, each term on the right other than $t^{24}C_{n-24}$ is $O(t^{n-5})$, and indeed is $O(t^{n-6})$ for even $n$. Similarly we use the fact that $C_{n}=C_{n-8}+(t^{48}+t^{40}+t^{16}+t^{8})C_{n-48}+t^{n-48}(t^{16}+t^{8})$ to get (b).
\qed
\end{proof}

\begin{lemma}
\label{lemma3.2}\hspace{2em}\\
\vspace{-2ex}
\begin{enumerate}
\item[(a)] When $n\equiv 0\mod{12}$, $C_{n}=O(t^{n-4})$.
\item[(b)] When $n\equiv 2\mod{12}$, $C_{n}+C_{n-1}=O(t^{n-4})$.
\item[(c)] When $n\equiv 8\mod{24}$, $C_{n}=O(t^{n-6})$.
\item[(d)] When $n\equiv 20\mod{24}$, $C_{n}+C_{n-2}=O(t^{n-6})$.
\end{enumerate}
\end{lemma}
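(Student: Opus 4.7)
The plan is to prove all four parts by induction on $n$, using Lemma \ref{lemma3.1}(a) as the inductive step. In every case $n$ is even, so Lemma \ref{lemma3.1}(a) gives $C_n = t^{24}C_{n-24} + O(t^{n-6})$. Reducing $n$ by $24$ preserves $n$ modulo $12$ and modulo $24$, so the inductive hypothesis of the \emph{same} part applies at $n-24$.

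For (a), the inductive hypothesis $C_{n-24} = O(t^{n-28})$ turns the main term $t^{24}C_{n-24}$ into $O(t^{n-4})$, so $C_n = O(t^{n-4})$. For (c), the inductive hypothesis $C_{n-24} = O(t^{n-30})$ gives $t^{24}C_{n-24} = O(t^{n-6})$, so $C_n = O(t^{n-6})$. For (b) and (d) I would expand both summands of the target linear combination. In (b), $C_n = t^{24}C_{n-24}+O(t^{n-6})$ (even case) and $C_{n-1}=t^{24}C_{n-25}+O(t^{n-6})$ (odd case, since $(n-1)-5=n-6$), so $C_n+C_{n-1} = t^{24}(C_{n-24}+C_{n-25}) + O(t^{n-6})$; the inductive hypothesis makes the bracket $O(t^{n-28})$, giving $O(t^{n-4})$. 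In (d), both $n$ and $n-2$ are even, so $C_n+C_{n-2} = t^{24}(C_{n-24}+C_{n-26})+O(t^{n-6})$, and the inductive hypothesis for (d) at $n-24$ makes the bracket $O(t^{n-30})$, yielding $O(t^{n-6})$.

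Since each inductive step decreases $n$ by $24$, every residue class modulo $24$ in each part needs a separate base case: $n \in \{12, 24\}$ for (a), $n \in \{2, 14\}$ for (b), $n = 8$ for (c), and $n = 20$ for (d). These are all verified by directly iterating the recursion of Theorem \ref{theorem1.8} to compute $C_0,\ldots,C_{24}$; one finds, for instance, $C_8 = t^2$, $C_{12} = t^8+t^4+t^2$, $C_{14}+C_{13}$ of degree $10$, $C_{18} = t^{16}$, $C_{20} = t^{16}+t^{14}+t^{10}+t^6+t^4$, and $C_{24} = t^{16}+t^{14}+t^{10}+t^8+t^6$. The main obstacle of the proof is not conceptual but simply the tedium of this base-case bookkeeping; the inductive step is essentially immediate from Lemma \ref{lemma3.1}(a). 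As a minor simplification one could derive from Theorem \ref{theorem1.8} a $12$-step identity $C_n = C_{n-2} + (t^{12}+t^{10}+t^4+t^2)C_{n-12} + t^{n-8}+t^{n-10}$, run the induction in steps of $12$ for (a) and (b), and thereby drop $n=24$ and $n=14$ from the list; but this is not essential.
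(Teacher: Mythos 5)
Your proof is correct and essentially identical to the paper's: the same induction in steps of $24$ driven by Lemma \ref{lemma3.1}(a), with the same base cases (the paper uses $n=0$ rather than $n=24$ in part (a), which spares one computation since $C_{0}=0$). Your explicit values all check out; in fact $C_{20}+C_{18}=t^{14}+t^{10}+t^{6}+t^{4}$ has degree $14$ rather than the degree $8$ asserted in the paper's proof, but either number is $O(t^{14})$, so the $n=20$ base case of (d) holds in both accounts.
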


\begin{proof}
$C_{0}$ and $C_{12}$ are easily seen to be $0$ and $t^{8}+t^{4}+t^{2}$. So (a) holds for $n=0$ and $12$. Suppose $n\ge 24$ and $0\bmod{12}$. Then $C_{n}=t^{24}C_{n-24}+O(t^{n-5})$; by induction this is $O(t^{n-4})$. Also, $C_{2}+C_{1}=0$, while $C_{14}+C_{13}$ has degree $10$. So (b) holds when $n=2$ or $14$. If $n\ge 26$ is $2\bmod{12}$, then $C_{n}+C_{n-1}=t^{24}(C_{n-24}+C_{n-25})+O(t^{n-5})$, which by induction is $O(t^{n-4})$, and we get (b).  Similarly, using the fact that $C_{8}=t^{2}$ and that $C_{20}+C_{18}$ has degree $8$, we use the second sentence in Lemma \ref{lemma3.1}(a) to establish (c) and (d) by induction.
\qed
\end{proof}

\begin{lemma}
\label{lemma3.3}\hspace{2em}\\
\vspace{-2ex}
\begin{enumerate}
\item[(a)] 
When $n\equiv 6\mod{24}$, $C_{n}+C_{n-1}+C_{n-2}+C_{n-3}+C_{n-4}+C_{n-5}=O(t^{n-8})$.\\
When $n\equiv 6\mod{24}$, $C_{n}+C_{n-1}+C_{n-2}+C_{n-3}=O(t^{n-8})$.
\item[(b)] 
When $n\equiv 18\mod{24}$, $C_{n}+C_{n-1}=O(t^{n-8})$.\\
When $n\equiv 18\mod{24}$, $C_{n}+C_{n-1}+C_{n-4}+C_{n-5}=O(t^{n-8})$.
\end{enumerate}
\end{lemma}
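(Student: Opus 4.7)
The plan is to mirror the inductive arguments of Lemmas~\ref{lemma3.1} and~\ref{lemma3.2}, but to drive the recursion with Lemma~\ref{lemma3.1}(b) rather than~(a). The target precision $O(t^{n-8})$ exceeds what Lemma~\ref{lemma3.1}(a) delivers (at best $O(t^{n-6})$), while Lemma~\ref{lemma3.1}(b) gives $C_k = t^{48}C_{k-48} + O(t^{k-9})$, which leaves exactly one order of slack below the target. Since $n - 48 \equiv n \mod{24}$, each residue class in the statement is preserved under the shift, so the induction hypothesis applies cleanly at the shifted index.

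For the induction step, fix one of the four sums --- say $S(n) = C_n + C_{n-1} + \cdots + C_{n-5}$ from the first half of part~(a). Applying Lemma~\ref{lemma3.1}(b) to each $C_{n-j}$ and absorbing the $j$-indexed errors into $O(t^{n-9})$ gives
\[
S(n) = t^{48}\, S(n-48) + O(t^{n-9}).
\]
The inductive hypothesis yields $S(n-48) = O(t^{(n-48)-8}) = O(t^{n-56})$, whence $t^{48}S(n-48) = O(t^{n-8})$; combined with the $O(t^{n-9})$ remainder this delivers $S(n) = O(t^{n-8})$. The same template covers each of the other three sums verbatim.

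The base cases are those $n$ below the threshold $48 + (\text{largest index shift in the sum})$: concretely $n \in \{6, 30\}$ for part~(a) and $n \in \{18, 42\}$ for part~(b). Each is handled by computing the relevant $C_k$ directly from Theorem~\ref{theorem1.8} and reading off the cancellations. One can halve this labor: within each part the two displayed identities differ by $C_{n-4} + C_{n-5}$, and in \emph{both} residue classes one has $n - 4 \equiv 2 \mod{12}$, so Lemma~\ref{lemma3.2}(b) already gives $C_{n-4} + C_{n-5} = O(t^{(n-4)-4}) = O(t^{n-8})$. Thus within each part it suffices to establish one of the two identities by induction; the companion follows by adding this known quantity.

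The only obstacle is the base-case bookkeeping --- an unenlightening but finite computation of $C_k$ for $k$ up to about $42$ via the recurrence of Theorem~\ref{theorem1.8}(a),(b). The inductive step is pure manipulation with the $O$-notation, and its success rests on the single conceptual point that the proper driver here is Lemma~\ref{lemma3.1}(b) rather than~(a): the weaker estimate of~(a) would leave an error of $O(t^{n-5})$, insufficient by three powers of $t$ to close an $O(t^{n-8})$ induction.
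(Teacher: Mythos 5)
Your proposal is correct and follows essentially the same route as the paper: reduce the second identity in each part to the first by observing that $C_{n-4}+C_{n-5}=O(t^{n-8})$ via Lemma~\ref{lemma3.2}(b), then run an induction with step $48$ driven by Lemma~\ref{lemma3.1}(b), with base cases $n=6,30$ and $n=18,42$. (Only a cosmetic quibble: Lemma~\ref{lemma3.1}(a) would actually leave an $O(t^{n-6})$ error, short by two powers rather than three, but your conclusion that (b) is the right driver stands.)
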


\begin{proof}
Lemma \ref{lemma3.2}(b) shows that $C_{n-4}+C_{n-5}=O(t^{n-8})$, so it's enough to check the first parts of (a) and (b).  These first parts are verified using Lemma \ref{lemma3.1}(b), an induction, and the following observations: $C_{6}+C_{5}+C_{4}+C_{3}+C_{2}+C_{1}=0$, $C_{30}+C_{29}+C_{28}+C_{27}+C_{26}+C_{25}$ has degree $22$, $C_{18}+C_{17}$ has degree $8$ and $C_{42}+C_{41}$ has degree $34$.
\qed
\end{proof}

\begin{lemma}
\label{lemma3.4}
The $g_{n}$ of Corollary \ref{corollary2.12} can be chosen so that:
\vspace{-2ex}
\begin{enumerate}
\item[(a)] When $n\equiv 0\mod{12}$, $g_{n}=t^{n}+O(t^{n-2})$.
\item[(b)] When $n\equiv 2\mod{12}$, $g_{n}=t^{n}+t^{n-1}+O(t^{n-2})$.
\item[(c)] When $n\equiv 8\mod{24}$, $g_{n}=t^{n}+O(t^{n-4})$.
\item[(d)] When $n\equiv 20\mod{24}$, $g_{n}=t^{n}+t^{n-2}+O(t^{n-4})$.
\item[(e)] When $n\equiv 6\mod{48}$, $g_{n}=t^{n}+t^{n-1}+t^{n-2}+t^{n-3}+t^{n-4}+t^{n-5}+O(t^{n-6})$.
\item[(f)] When $n\equiv 18\mod{48}$, $g_{n}=t^{n}+t^{n-1}+O(t^{n-6})$.
\item[(g)] When $n\equiv 30\mod{48}$, $g_{n}=t^{n}+t^{n-1}+t^{n-2}+t^{n-3}+O(t^{n-6})$.
\item[(h)] When $n\equiv 42\mod{48}$, $g_{n}=t^{n}+t^{n-1}+t^{n-4}+t^{n-5}+O(t^{n-6})$.
\end{enumerate}
\end{lemma}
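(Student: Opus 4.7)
The plan is: for each $n \equiv 0, 2 \mod{6}$, start with any monic $g_n$ of degree $n$ in the kernel of $\varphi$ (which exists by Corollary \ref{corollary2.12}), write $g_n = t^n + c_{n-1} t^{n-1} + c_{n-2} t^{n-2} + \cdots$, and adjust by adding multiples of lower $g_m$ (with $m < n$ and $m \equiv 0, 2 \mod{6}$) to achieve the prescribed leading pattern. The coefficients $c_{n-k}$ split into intrinsic values---those not modifiable by any lower $g_m$---and adjustable values. For $n \equiv 0 \mod{6}$ the intrinsic ones are $c_{n-1}, c_{n-2}, c_{n-3}$; for $n \equiv 2 \mod{6}$ only $c_{n-1}$ is intrinsic. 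Each intrinsic value is computed by extracting the coefficient of $t^{n-j}$ from $\varphi(g_n) = 0$, using Lemma \ref{lemma1.9} for the degrees of the $C_\ell$ and the sharper estimates of Lemmas \ref{lemma3.2} and \ref{lemma3.3} for the top few coefficients of $C_n$ and its close relatives.

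For cases (a) and (b) the only intrinsic condition is on $c_{n-1}$. Extracting the coefficient of $t^{n-2}$ from $\varphi(g_n) = 0$ reduces the equation to (coefficient of $t^{n-2}$ in $C_n$) $+\, c_{n-1} = 0$, since $C_{n-1}$ is monic of degree $n - 2$ while every other $C_k$ with $k \leq n - 2$ has degree at most $n - 3$. Lemma \ref{lemma3.2}(a) or (b) then pins down this coefficient of $C_n$, giving the prescribed $c_{n-1}$ at once.

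For cases (c)--(h) more coefficients are prescribed and they include adjustable ones. After determining $c_{n-1}$ as above, pick $\lambda \in Z/2$ so that the new $c_{n-2}$ of $g_n + \lambda g_{n-2}$ hits its target, then verify that the resulting new $c_{n-3}$ is also correct. This verification combines the $t^{n-4}$ coefficient equation for $g_n$ (giving a relation between $c_{n-2}$ and $c_{n-3}$) with the analogous equation for $g_{n-2}$ (which determines the intrinsic coefficient of $g_{n-2}$ at $t^{n-3}$). The main obstacle is the compatibility of these two equations: the identity of Lemma \ref{lemma3.2} or \ref{lemma3.3} controlling the relevant coefficient of $C_n$ and the corresponding identity controlling that of $C_{n-2}$ must yield values matching modulo $2$, and this is precisely the arithmetic coincidence that the hypotheses of each part encode. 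For cases (e)--(h) the verification runs through the coefficient at $t^{n-6}$ and invokes the paired identities within each part of Lemma \ref{lemma3.3}, but the pattern is identical.
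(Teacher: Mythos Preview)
Your approach is correct and draws on exactly the same ingredients as the paper's: Lemma~\ref{lemma1.9} for the degrees of the $C_k$, and Lemmas~\ref{lemma3.2}--\ref{lemma3.3} for the precise high-order behaviour of $C_n$ and its immediate neighbours. The packaging, however, is different. Rather than extract coefficients one at a time and then run a compatibility check, the paper passes to an induced map
\[
\bar\varphi:\ \frac{\{\text{degree}\le n\}}{\{\text{degree}\le n-6\}}\ \longrightarrow\ \frac{\{\text{degree}\le n-2\}}{\{\text{degree}\le n-8\}}
\]
(in case~(g), say), uses Lemma~\ref{lemma3.3} to show the prescribed leading pattern lies in $\ker\bar\varphi$, and uses Lemma~\ref{lemma1.9} to show the image of $\bar\varphi$ has dimension at least~$4$, so $\ker\bar\varphi$ has dimension at most~$2$ and is therefore spanned by the images of $g_n$ and $g_{n-4}$. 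The required $g_n$ then drops out immediately. This dimension-count formulation is cleaner in that the ``arithmetic coincidence'' you flag never has to be checked by hand: once you know the kernel has dimension exactly~$2$, containing both the image of $g_n$ and the prescribed pattern, consistency is automatic. Your explicit coefficient route is equivalent, but trades that single linear-algebra observation for a string of per-case verifications.
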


\begin{proof}
I'll treat (g)---the other parts are handled similarly. The $\varphi$ of Corollary \ref{corollary2.12} taking $t^{n}$ to $C_{n}$ gives a map:
\footnotesize
\[
\bar{\varphi}:\frac{\mbox{polynomials of degree $\le n$}}{\mbox{polynomials of degree $\le n-6$}}\rightarrow \frac{\mbox{polynomials of degree $\le n-2$}}{\mbox{polynomials of degree $\le n-8$}} 
\]
\normalsize

By Lemma \ref{lemma3.3}, $\bar{\varphi}$ annihilates $t^{n}+t^{n-1}+t^{n-2}+t^{n-3}$. So it suffices to show that every element in the kernel of $\bar{\varphi}$ is represented by some $g_{n}$ in the kernel of $\varphi$. Since $n$ and $n-4$ are $0$ and $2$ mod $6$, there are $g$ of degree $n$ and $n-4$ in the kernel of $\varphi$, and if we can show that the kernel of $\bar{\varphi}$ is spanned by the images of these 2 elements we'll be done. So it's enough to show that the image of $\bar{\varphi}$ has dimension $\ge 4$. But Lemma \ref{lemma1.9} tells us that $\varphi$ maps $t^{n-1},t^{n-2},t^{n-3},t^{n-5}$ to polynomials of degrees $n-2,n-4,n-5,n-6$, completing the proof.
\qed
\end{proof}

\begin{theorem}
\label{theorem3.5}
Let $A$, $B$, $C$ and $D$ be $1$, $t^{6}+t^{5}+t^{4}+t^{3}+t^{2}+t$, $t^{2}+t$ and $t^{8}$.  Then if the $g_{n}$, $n\equiv 0$ or $2\mod{6}$, are chosen as in Lemma \ref{lemma3.4}:
\vspace{-2ex}
\begin{enumerate}
\item[(a)] $g_{12m}=(t^{6}+t^{5})^{2m}\cdot A+O(t^{12m-2})$.
\item[(b)] $g_{12m+6}=(t^{6}+t^{5})^{2m}\cdot B+O(t^{12m})$.
\item[(c)] $g_{12m+2}=(t^{6}+t^{5})^{2m}\cdot C+O(t^{12m})$.
\item[(d)] $g_{12m+8}=(t^{6}+t^{5})^{2m}\cdot D+O(t^{12m+4})$.
\end{enumerate}
\end{theorem}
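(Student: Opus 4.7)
The plan is to deduce each part of Theorem~\ref{theorem3.5} directly from Lemma~\ref{lemma3.4} by coefficient comparison, using the characteristic $2$ identity
\[
(t^{6}+t^{5})^{2m} \,=\, (t^{12}+t^{10})^{m} \,=\, \sum_{k=0}^{m} {m \choose k}\, t^{12m-2k},
\]
together with the fact that the first three binomial coefficients ${m \choose 0}$, ${m \choose 1}$, ${m \choose 2}$ modulo $2$ are determined by the residue of $m$ modulo $4$.

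I would dispatch parts (a), (c) and (d) first. For (a), the right-hand side reads $t^{12m}+O(t^{12m-2})$ since the next term after $t^{12m}$ in the expansion has degree at most $12m-2$; this matches Lemma~\ref{lemma3.4}(a). For (c), multiplying the expansion by $t^{2}+t$ gives $t^{12m+2}+t^{12m+1}+O(t^{12m})$, matching Lemma~\ref{lemma3.4}(b). For (d), multiplying by $t^{8}$ gives $t^{12m+8} + {m \choose 1}\, t^{12m+6} + O(t^{12m+4})$, which matches Lemma~\ref{lemma3.4}(c) when $m$ is even and Lemma~\ref{lemma3.4}(d) when $m$ is odd.

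Part (b) is the main computation. Write $B=\sum_{a=1}^{6}t^{a}$. For $j\in\{0,1,\ldots,5\}$, the coefficient of $t^{12m+6-j}$ in $(t^{6}+t^{5})^{2m}\cdot B$ is obtained by summing over pairs $(k,a)$ with $a\in\{1,\ldots,6\}$ and $12m-2k+a=12m+6-j$; the constraint $a=2k+6-j$ forces $k\in\{0,1,\ldots,\lfloor j/2\rfloor\}$, so this coefficient equals $\sum_{k=0}^{\lfloor j/2\rfloor}{m \choose k}$ modulo $2$. I would then split on the residue of $m$ modulo $4$ (equivalently on $n=12m+6$ modulo $48$). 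Using Lucas's theorem to evaluate ${m \choose 1}$ and ${m \choose 2}$ modulo $2$ in each class, I would tabulate the six resulting coefficients for $j=0,\ldots,5$ and compare them with the coefficients of $t^{n-1},\ldots,t^{n-6}$ prescribed for $g_{n}$ by the corresponding part (e)--(h) of Lemma~\ref{lemma3.4}; a short case check shows agreement in each of the four residue classes.

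The only obstacle is the book-keeping in part (b) across the four cases; no idea is required beyond Lemma~\ref{lemma3.4} and the freshman's dream in characteristic $2$.
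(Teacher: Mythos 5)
Your proposal is correct and follows essentially the same route as the paper: both deduce each part from Lemma~\ref{lemma3.4} by extracting the top coefficients of $(t^{6}+t^{5})^{2m}$ and splitting part (b) into the four cases $m\equiv 0,1,2,3\pmod{4}$ (matched to parts (e)--(h) of the lemma); the paper organizes the coefficient extraction by writing $(t^{6}+t^{5})^{2m}=(t^{6}+t^{5})^{8k}(t^{6}+t^{5})^{2(m\bmod 4)}$ with $(t^{6}+t^{5})^{8k}=t^{48k}+O(t^{48k-8})$, while you use Lucas's theorem on $\binom{m}{k}$, which is the same computation in a different notation. I verified your tabulated coefficients $\sum_{k\le\lfloor j/2\rfloor}\binom{m}{k}$ in all four residue classes and they agree with Lemma~\ref{lemma3.4}(e)--(h), so the argument is complete.
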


\begin{proof}
$(t^{6}+t^{5})^{2m}=t^{12m}+O(t^{12m-2})$, while $(t^{6}+t^{5})^{2m}(t^{2}+t)=t^{12m+2}+t^{12m+1}+O(t^{12m})$. So (a) and (b) of Lemma \ref{lemma3.4} give us (a) and (c).  In like manner, (d) for even and odd $m$ follows from (c) and (d) of Lemma \ref{lemma3.4}, while (b) for $m\equiv 0,1,2$ and $3\bmod{4}$ follows from (e), (f), (g) and (h) of that lemma. Suppose for example that $n=12m+6$ and $m=4k+3$, so that $n=48k+42$. Then $(t^{6}+t^{5})^{2m}B=(t^{6}+t^{5})^{8k}(t^{6}+t^{5})^{6}B=t^{48k}(t^{42}+t^{41}+t^{38}+t^{37})+O(t^{48k+36})=t^{n}+t^{n-1}+t^{n-4}+t^{n-5}+O(t^{n-6})$. By Lemma \ref{lemma3.4}(h) this is $g_{n}+O(t^{n-6})=g_{12m+6}+O(t^{12m})$.
\qed
\end{proof}

\section{\bm{$N2$}, \bm{$N2a$} and \bm{$N2b$}. The structure of \bm{$K$}}
\label{section4}

Recall that $\modd\subset Z/2[r]$ consists of the $(r^{2}+r)g(r^{2})$, $g$ in $Z/2[t]$. $\modd$ is stable under multiplication by $r^{2}$, and in particular is a $Z/2[G^{2}]$-module.

\begin{definition}
\label{def4.1}
$N2$ is the (free rank 5) $Z/2[G^{2}]$-submodule of $\modd$ generated by the $u_{0},u_{1},u_{2},u_{4}$ and $u_{5}$ of Lemma \ref{lemma2.6}.
\end{definition}

\begin{lemma}
\label{lemma4.2}
Fix $m\ge 0$ and let $L$ and $L^{*}$ be as in Definition \ref{def2.7}. The kernels of $(U+I)^{2}$ acting on $L^{*}$ and its subspace $L$ are the same.
\end{lemma}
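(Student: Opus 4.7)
The plan is to establish the lemma by a dimension count, since the inclusion $\ker(U+I)^{2}|_{L}\subseteq\ker(U+I)^{2}|_{L^{*}}$ is immediate from $L\subseteq L^{*}$. I will show that both sides have dimension exactly $4m+4$, which, together with the inclusion, forces equality.

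For the lower bound, I would quote the calculation already inside the proof of Theorem \ref{theorem2.10}: that argument shows $(U+I)^{2}$ sends $L$ (of dimension $5m+5$) into the span of the $F^{n}$ with $n$ odd and $\le 2m+1$, a space of dimension $\le m+1$. Hence $\dim\ker(U+I)^{2}|_{L}\ge(5m+5)-(m+1)=4m+4$.

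For the upper bound I need the \emph{exact} value $\dim\ker(U+I)|_{L^{*}}=2m+2$, not merely the lower bound that Theorem \ref{theorem2.10} supplies. The matching upper bound is buried in the proof of Theorem \ref{theorem2.11}: if $f=(r^{2}+r)g(r^{2})$ is a non-zero element of $\ker(U+I)|_{L^{*}}$ with $g$ of top degree $j\le 6m+5$, then Theorem \ref{theorem1.8}(c) together with Lemma \ref{lemma1.10} forces $j\equiv 0$ or $2\mod{6}$. Choosing a basis of $\ker(U+I)|_{L^{*}}$ with distinct degrees in $r$ then restricts those degrees to the $2m+2$ values $\{2,6,14,18,\dots,12m+2,12m+6\}$, giving $\dim\ker(U+I)|_{L^{*}}\le 2m+2$. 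Now apply the standard fact that for any linear operator $T$ on a finite-dimensional space, $T$ induces an injection $\ker T^{2}/\ker T\hookrightarrow\ker T$, so $\dim\ker T^{2}\le 2\dim\ker T$. Taking $T=U+I$ on $L^{*}$ yields $\dim\ker(U+I)^{2}|_{L^{*}}\le 4m+4$.

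Chaining $4m+4\le\dim\ker(U+I)^{2}|_{L}\le\dim\ker(U+I)^{2}|_{L^{*}}\le 4m+4$ forces equality everywhere, so the two nested kernels coincide. The only delicate point, and essentially the sole obstacle, is extracting the exact equality $\dim\ker(U+I)|_{L^{*}}=2m+2$: the argument is already present inside the proof of Theorem \ref{theorem2.11}, but it must be isolated and reused rather than simply cited as a consequence of that theorem's statement.
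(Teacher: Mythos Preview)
Your proof is correct and follows essentially the same route as the paper: both use the lower bound $\dim\ker(U+I)^{2}|_{L}\ge 4m+4$ from the proof of Theorem~\ref{theorem2.10}, extract the exact equality $\dim K_{m}=2m+2$ from the proof of Theorem~\ref{theorem2.11}, and then bound $\dim\ker(U+I)^{2}|_{L^{*}}\le 2\dim K_{m}=4m+4$. Your write-up is slightly more explicit about the injection $\ker T^{2}/\ker T\hookrightarrow\ker T$, but the argument is identical in substance.
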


\begin{proof}
The proof of Theorem \ref{theorem2.10} shows that the dimension of the second kernel is $\ge 4m+4$. But the kernel of $U+I:L^{*}\rightarrow L^{*}$ is just the $K_{m}$ of Theorem \ref{theorem2.10}, and the proof of Theorem \ref{theorem2.11} produces a basis of $K_{m}$ with $2m+2$ elements. So the dimension of the first kernel is $\le 2(\,\mbox{the dimension of }K_{m})=4m+4$.
\qed
\end{proof}

\begin{theorem}
\label{theorem4.3}
The kernel, $K$, of $U+I:\modd\rightarrow\modd$ is a subspace of $N2$. So if $g_{n}$, $n\equiv 0$ or $2\bmod{6}$, are as in Corollary \ref{corollary2.12} and $f_{n}=(r^{2}+r)g_{n}(r^{2})$, then each $f_{n}$ is in $N2$.
\end{theorem}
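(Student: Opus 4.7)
The plan is to reduce everything to the finite‐dimensional picture already set up in Lemma \ref{lemma4.2} and then let $m$ grow. The key observation is that the submodule $N2$ and the finite spaces $L$ of Definition \ref{def2.7} are related by $N2=\bigcup_{m\ge 0} L$: each generator $u_iG^{2n}$ of $N2$ lies in $L$ as soon as $m\ge n$. So it suffices to show $K_m\subset L$ for every $m$, and then for an arbitrary $f\in K$ pick $m$ large enough that $f\in L^*$.

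First I would take $f\in K$ and write $f=(r^{2}+r)g(r^{2})$ with $g\in Z/2[t]$, which is possible since $K\subset \modd$. Choose $m$ so that $\deg g\le 6m+5$; then $f\in L^{*}$, and because $(U+I)f=0$ we have $f\in K_m$. Now $K_m\subset \ker((U+I)^{2}|_{L^{*}})$ trivially, and by Lemma \ref{lemma4.2} the latter kernel coincides with $\ker((U+I)^{2}|_{L})$, which is contained in $L\subset N2$. Hence $f\in N2$.

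The second assertion is then immediate: by Corollary \ref{corollary2.13} each $f_n=(r^{2}+r)g_n(r^{2})$ lies in $K$, so by the first part $f_n\in N2$.

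There is no real obstacle here; the work has already been done in Lemma \ref{lemma4.2}, whose proof packages the dimension count of Theorem \ref{theorem2.10} together with the explicit basis of $K_m$ produced in Theorem \ref{theorem2.11}. The only thing to be careful about is that the choice of $m$ depends on $f$, but since $N2$ does not depend on $m$ this causes no difficulty.
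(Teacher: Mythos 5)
Your proposal is correct and is essentially the paper's own argument: embed a given $f\in K$ in $L^{*}$ for $m$ large enough, note that $f$ lies in the kernel of $(U+I)^{2}$ on $L^{*}$, and invoke Lemma \ref{lemma4.2} to conclude $f\in L\subset N2$. The only difference is that you spell out explicitly the steps the paper compresses into ``Lemma \ref{lemma4.2} tells us that $f$ is in $L$,'' which is a reasonable expansion rather than a new route.
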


\begin{proof}
If $f$ is in $K$, $f$ is in $L^{*}$ for some $m\ge 0$. Since $(U+I)f=0$, Lemma \ref{lemma4.2} tells us that $f$ is in $L$. But by definition $N2$ contains $L$ for every $m$.
\qed
\end{proof}

\begin{definition}
\label{def4.4}
$N1$ is the $Z/2[G^{2}]$-submodule of $N2$ generated by  $u_{2}=G$. Note that $G^{k}$, $k>0$ and odd, are a basis of $N1$.
\end{definition}

\begin{definition}
\label{def4.5}
$J_{1},J_{7},J_{3}$ and $J_{9}$ are $F,F^{2}G,F^{4}G$ and $F^{8}/G$.
\end{definition}

Note that mod $N1$, $J_{3}=(F+G)^{8}/G=F(F+G)^{2}$. So $J_{3}$, like $J_{1}$, $J_{7}$ and $J_{9}$ lies in $\modd$.

\begin{lemma}
\label{lemma4.6}
$G,J_{1},J_{3},J_{7}$ and $J_{9}$ generate the same $Z/2[G^{2}]$-submodule of $\modd$ as do $G,F,F^{2}G,F^{3}$ and $F^{4}G$. This submodule is in fact $N2$. It follows that $J_{1},J_{3},J_{7}$ and $J_{9}$ are a $Z/2[G^{2}]$ basis of $N2/N1$.
\end{lemma}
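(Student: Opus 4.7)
The plan is to establish that both five-element generating sets are $Z/2[G^2]$-bases of $N2$; the basis statement for $N2/N1$ then follows immediately by quotienting out the rank-one submodule $N1 = Z/2[G^2]\cdot G$ of Definition \ref{def4.4}. First I would verify that $G,F,F^2G,F^3,F^4G$ all lie in $N2$ and conversely that each generator $u_i$ of Definition \ref{def4.1} is a $Z/2[G^2]$-combination of these five; the identities needed are routine, relying only on the characteristic-two Frobenius expansions of $(F+G)^n$ for $n=2,3,4$, the definition $u_1=(F+G)^3+G$, and $(F+G)FG=F^2G+FG^2$. Since $N2$ is free of rank $5$ over $Z/2[G^2]$ by Definition \ref{def4.1}, any spanning set of exactly $5$ elements is automatically a basis.

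Comparison of the two sets $\{G,F,F^2G,F^3,F^4G\}$ and $\{G,J_1,J_3,J_7,J_9\} = \{G,F,F^2G,F^4G,F^8/G\}$ reduces, since four elements are common, to the interchange of $F^3$ with $F^8/G$. The heart of the argument is the single identity
\[
F^8/G = F^3 + FG^2 + G^7,
\]
which I would derive from the fundamental relation $FG=(F+G)^6$ of Definition \ref{def1.1}. Expanding $(F+G)^6$ in characteristic $2$ as $F^6+F^4G^2+F^2G^4+G^6$, multiplying by $F^2$, and then eliminating $F^6G^2$ using the same relation times $G^2$, produces two pair-wise cancellations, leaving $F^3G = F^8+FG^3+G^8$; division by $G$ yields the identity. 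Writing the right side as $F^3 + G^2\cdot F + G^6\cdot G$ then exhibits $F^8/G$ as a $Z/2[G^2]$-combination of $\{G,F,F^3\}$, and conversely, so the two sets span the same $Z/2[G^2]$-submodule, which by the first step is $N2$.

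The main obstacle is the identity $F^8/G = F^3+FG^2+G^7$: once in hand, everything else is routine linear algebra over $Z/2[G^2]$, but its derivation requires two successive substitutions using $FG=(F+G)^6$ and careful tracking of pair-wise cancellations in characteristic~$2$.
\qed
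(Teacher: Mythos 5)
Your proof is correct and takes essentially the same route as the paper: both arguments hinge on using $FG=(F+G)^{6}$ to rewrite $F^{8}/G$ as $F^{3}+FG^{2}+G^{7}$ (the paper phrases this as $J_{3}\equiv F(F+G)^{2}\bmod N1$) and on expressing the $u_{i}$ in terms of $G,F,F^{2}G,F^{3},F^{4}G$ via an invertible (unitriangular) change of generators over $Z/2[G^{2}]$. The only difference is cosmetic --- the paper computes modulo $N1$ throughout instead of recording the exact identity --- so no further comment is needed.
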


\begin{proof}
$J_{1}=F, J_{7}=F^{2}G, J_{9}=F^{4}G$ and $J_{3}=F^{3}+FG^{2}\bmod{N1}$, giving the first result. Also, mod $N1$, $u_{0},u_{4},u_{1}+u_{4}$ and $u_{5}+u_{4}$ are $F,F^{2}G, F^{3}+FG^{2}$ and $F^{4}G+FG^{2}$. So the second result follows.
\qed
\end{proof}

We have defined $J_{k}$ for $k$ in $\{1,3,7,9\}$. We extend the definition to all $k>0$ and prime to $10$ by taking $J_{k+10}$ to be $G^{2}J_{k}$. Lemma \ref{lemma4.6} then shows that the $J_{k}$ form a  $Z/2$-basis of $N2/N1$.

We will use results from \cite{4} to obtain level 5 analogs of the level 3 theorems of \cite{3}. To that end we need to compare the subspace $K$ of $N2$ appearing in Theorem \ref{theorem4.3} with a certain space $N2a$ appearing in a direct sum decomposition of $N2/N1$.

\begin{definition}
\label{def4.7}
$\chi$ is the mod~$20$ Dirichlet character taking $1,3,7,9$ to $1$ and $11,13,17,19$ to $-1$. $N2a$ is spanned by the $J_{k}$ in $N2/N1$ with $\chi(k)=1$. $N2b$ is spanned by the $J_{k}$ in $N2/N1$ with $\chi(k)=-1$.
\end{definition}

Note that $N2a$ is a $Z/2[G^{4}]$-submodule of $N2/N1$ with basis $\{J_{1},J_{3},J_{7},J_{9}\}$, that $N2b$ is a $Z/2[G^{4}]$-submodule with basis $\{J_{11},J_{13},J_{17},J_{19}\}$ and that $N2/N1=N2a\oplus N2b$. Composing the inclusion $K\subset N2$ of Theorem \ref{theorem4.3} with the obvious projection $N2\rightarrow N2a$ coming from the direct sum decomposition, we get a map $K\rightarrow N2a$. We shall show that this map is bijective---a level 5 analog to the level 3 result proved in the last paragraph of section 3 of \cite{3}.

\begin{definition}
\label{def4.8}
An element of $N2/N1$ is $O^{*}(J_{k})$ if it is a sum of $J_{i}$ with $i\le k$.
\end{definition}

The proof of Lemma \ref{lemma4.6} shows that mod $N1$, $u_{0},u_{1},u_{4}$ and $u_{5}$ are $J_{1},J_{7}+J_{3},J_{7}$ and $J_{11}+J_{9}+J_{7}$. So the images of $G^{n}u_{i}$ in $N2/N1$ are $O^{*}(J_{10n+1})$, $O^{*}(J_{10n+7})$, $O^{*}(J_{10n+7})$ and $O^{*}(J_{10n+11})$ according as $i$ is $0,1,4$ or $5$.

\begin{lemma}
\label{lemma4.9}
Suppose $h$ is in $Z/2[t]$ and $f=(r^{2}+r)h(r^{2})$ is in $N2$.
\vspace{-2ex}
\begin{enumerate}
\item[(a)] If $\mathrm{degree}\ h\le 6m+4$, the image of $f$ in $N2/N1$ is $O^{*}(J_{10m+7})$.
\item[(b)] If $\mathrm{degree}\ h\le 12m$, the image of $f$ in $N2/N1$ is $O^{*}(J_{20m+1})$.
\end{enumerate}
\end{lemma}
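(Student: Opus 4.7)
Since $N2$ is the free rank 5 $Z/2[G^2]$-module on $u_0, u_1, u_2, u_4, u_5$, any $f \in N2$ has a unique expansion
\[
f = \sum_{\substack{n \ge 0 \\ i \in \{0,1,2,4,5\}}} c_{n,i}\, G^{2n} u_i, \qquad c_{n,i}\in Z/2.
\]
I would begin by translating this expansion into a statement about $h(t)$. By Lemma \ref{lemma2.6}, $u_i = (r^2+r) g_i(r^2)$ for some $g_i$ of degree exactly $i$, and $G^2 = r^{12}+r^{10}$ corresponds under $t = r^2$ to $t^6+t^5$. Therefore $h(t) = \sum c_{n,i}\,(t^6+t^5)^n g_i(t)$. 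The key observation is that the polynomials $(t^6+t^5)^n g_i(t)$ have leading term $t^{6n+i}$, and the exponents $6n+i$ with $i \in \{0,1,2,4,5\}$ are all distinct (the set is $\{0,1,2,4,5,6,7,8,10,\ldots\}$, missing only multiples of 3 that are $\equiv 3 \mod 6$). Hence $c_{n,i}$ can be nonzero only when $6n+i \le \deg h$.

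Next I would pass to $N2/N1$, where the $i = 2$ terms vanish (since $u_2 = G$ generates $N1$ over $Z/2[G^2]$), and invoke the computation stated in the paragraph just preceding the lemma: the images of $G^{2n}u_i$ in $N2/N1$ are $O^*(J_{10n+1})$, $O^*(J_{10n+7})$, $O^*(J_{10n+7})$, $O^*(J_{10n+11})$ for $i = 0,1,4,5$ respectively.

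For part (a), the bound $6n+i \le 6m+4$ with $i \in \{0,1,4,5\}$ forces $n \le m$ for $i \in \{0,1,4\}$ and $n \le m-1$ for $i = 5$. The resulting maxima on the $J$-indices are $10m+1$, $10m+7$, $10m+7$, and $10(m-1)+11 = 10m+1$, so the overall bound is $O^*(J_{10m+7})$. For part (b), $6n+i \le 12m$ gives $n \le 2m$ for $i = 0$ and $n \le 2m-1$ for $i \in \{1,4,5\}$; the respective maximum indices are $20m+1$, $20m-3$, $20m-3$, $10(2m-1)+11 = 20m+1$, giving overall $O^*(J_{20m+1})$.

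The only step that needs any care is the uniqueness of the expansion $h = \sum c_{n,i}(t^6+t^5)^n g_i(t)$ combined with the claim that the exponents $6n+i$ are distinct; this follows at once from the shape of $\{0,1,2,4,5\}$ mod $6$. Everything else is bookkeeping against the four cases of the preceding remark, so I don't expect any real obstacle.
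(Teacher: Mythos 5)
Your proposal is correct and is essentially the paper's own argument: expand $f$ over the free $Z/2[G^2]$-basis $u_0,u_1,u_2,u_4,u_5$, note that the corresponding polynomials $(t^6+t^5)^n g_i(t)$ have distinct degrees $6n+i$ so that only terms with $6n+i\le\deg h$ can occur, and then read off the $J$-indices from the images of $G^{2n}u_i$ in $N2/N1$. The case bookkeeping for (a) and (b) matches the paper exactly.
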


\begin{proof}
Write $f$ as a sum of $G^{n}u_{i}$, $i$ in $\{0,1,2,4,5\}$.  Then $h$ is the sum of the corresponding $(t^{6}+t^{5})^{n},(t^{6}+t^{5})^{n}\cdot t,(t^{6}+t^{5})^{n}\cdot t^{2},(t^{6}+t^{5})^{n}(t^{4}+t^{3})$ and $(t^{6}+t^{5})^{n}(t^{5}+t^{3})$.  The degrees of these elements are distinct. So in the situation of (a), each of the elements has degree $\le 6m+4$, and each $n$ that appears is $\le m$, with strict inequality when $i=5$. The paragraph following Definition \ref{def4.8} then gives the result. Similarly, in the situation of (b), each element in the sum for $h$ has $n\le 2m$ with inequality when $i=1,2,4$ or $5$, and again we use the paragraph following Definition \ref{def4.8}.
\qed
\end{proof}

\begin{theorem}
\label{theorem4.10}
Let the $g_{n}$ of Corollary \ref{corollary2.12}, $n\equiv 0$ or $2\mod{6}$, be chosen as in Theorem \ref{theorem3.5}, and let $f_{n}=(r^{2}+r)g_{n}(r^{2})$. Recall that the $f_{n}$ are a $Z/2$-basis of $K$. Consider the composite map $K\subset N2\rightarrow N2a$ described after Definition \ref{def4.7}. Then:
\vspace{-2ex}
\begin{enumerate}
\item[(a)] $f_{12m}$ maps to $J_{20m+1}+O^{*}(J_{20m-11})$.
\item[(b)] $f_{12m+6}$ maps to $J_{20m+3}+O^{*}(J_{20m+1})$.
\item[(c)] $f_{12m+2}$ maps to $J_{20m+7}+O^{*}(J_{20m+3})$.
\item[(d)] $f_{12m+8}$ maps to $J_{20m+9}+O^{*}(J_{20m+7})$.
\item[(e)] The map $K\subset N2\rightarrow N2a$ is bijective.
\end{enumerate}
\end{theorem}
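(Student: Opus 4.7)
The plan for parts (a)--(d) is uniform: for each $n$ I will exhibit an explicit $w \in N2$ whose image in $N2a$ is the claimed leading $J_k$ (up to admissible error), and such that $f_n - w = (r^2+r)h'(r^2)$ with $\deg h'$ small enough that Lemma \ref{lemma4.9} bounds its image by the required $O^*$-term. The key bookkeeping is that each basis element $u_i G^{2k}$ of $N2$ corresponds to the $g$-polynomial $(t^6+t^5)^k \, q_i(t)$, where $(q_0, q_1, q_2, q_4, q_5) = (1,\, t,\, t^2,\, t^4+t^3,\, t^5+t^3)$; Theorem \ref{theorem3.5} then pins down the top coefficients of $f_n$ in this basis.

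For (a), take $w = u_0 G^{4m}$, whose $g$-polynomial $(t^6+t^5)^{2m}$ matches the leading part of $g_{12m}$. Then $w$ maps to $J_{20m+1}$ mod $N1$, the residual $h'$ has degree $\le 12m-2$, and Lemma \ref{lemma4.9}(a) applied with $m' = 2m-1$ bounds its image by $O^*(J_{20m-3})$. Projection to $N2a$ drops $J_{20m-3}, J_{20m-7}, J_{20m-9}$ (all with $\chi = -1$), yielding $O^*(J_{20m-11})$. Parts (b) and (c) are analogous, using the identities $(r^2+r)(r^{12}+r^{10}+r^8+r^6+r^4+r^2) = G^2 u_0 + u_4 + u_2 + u_1$ and $(r^2+r)(r^4+r^2) = u_2+u_1$ to define $w \in N2$; the residual, of degree $\le 12m$, is bounded by $O^*(J_{20m+1})$ via Lemma \ref{lemma4.9}(b), and a direct computation shows $w$ projects to $J_{20m+3}$ in case (b) and to $J_{20m+3}+J_{20m+7}$ in case (c), yielding the stated results.

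The main obstacle is (d): the naive leading part $(r^2+r) G^{4m} r^{16}$ is \emph{not} in $N2$ (it violates the compatibility relations among the $u_i G^{2k}$-coefficients). I will instead take $w = u_2 G^{4m+2} + u_1 G^{4m+2} + u_0 G^{4m+2} + u_5 G^{4m} \in N2$; its $g$-polynomial telescopes to $(t^6+t^5)^{2m}(t^8+t^3)$, so $f_{12m+8} - w = (r^2+r)h'(r^2)$ with $\deg h' \le 12m+4$, and Lemma \ref{lemma4.9}(a) (with $m' = 2m$) bounds the residual's image by $O^*(J_{20m+7})$. Computing the image of $w$ in the $J$-basis: the two copies of $J_{20m+11}$ arising from $u_5 G^{4m}$ and $u_0 G^{4m+2}$ cancel, while projection to $N2a$ discards $J_{20m+13}$ and $J_{20m+17}$ (both in $N2b$). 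What remains is $J_{20m+9} + J_{20m+7}$, and combining with the residual bound yields $J_{20m+9} + O^*(J_{20m+7})$.

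For (e), I will order the basis $\{f_n\}$ of $K$ and the basis $\{J_k : \chi(k) = 1\}$ of $N2a$ so that the block $(f_{12m}, f_{12m+6}, f_{12m+2}, f_{12m+8})$ bijects in order to $(J_{20m+1}, J_{20m+3}, J_{20m+7}, J_{20m+9})$. Parts (a)--(d) then exhibit $K \to N2a$ as upper triangular with $1$'s on the diagonal in this ordering (each $O^*$-error lying strictly below the corresponding leading $J_k$), hence bijective.
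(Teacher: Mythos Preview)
Your proof is correct and follows essentially the same approach as the paper's: for each case you identify the explicit combination of $u_i G^{2k}$'s matching the leading part of $g_n$ from Theorem~\ref{theorem3.5}, use Lemma~\ref{lemma4.9} to bound the residual, and project out the $N2b$-components---including the key trick in (d) of replacing $t^8$ by $t^8+t^3 = (t^6+t^5)(t^2+t+1)+(t^5+t^3)$ so that the leading part lands in $N2$. The paper handles the base case $m=0$ in (a) separately (where $f_0 = F+G$ maps to $J_1$), which you should mention since Lemma~\ref{lemma4.9}(a) with $m'=2m-1$ requires $m\ge 1$; otherwise the arguments coincide.
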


\begin{proof}
It's enough to prove (a), (b), (c), (d). For if they hold, our map takes a basis of $K$ to a basis of $N2a$.
\vspace{-2ex}
\begin{enumerate}
\item[(a)] $g_{0}=1$ and so $f_{0}=r^{2}+r=F+G$ with image $J_{1}$. Suppose $m>0$. By Theorem \ref{theorem3.5}, $g_{12m}=(t^{6}+t^{5})^{2m}+g^{*}$ with $\mathrm{degree}\ g^{*}\le 6(2m-1)+4$. Let $f^{*}=(r^{2}+r)g^{*}(r^{2})$. Then $f_{12m}=G^{4m}u_{0}+f^{*}$. Since $f_{12m}$ is in $N2$, so is $f^{*}$.  The image of $u_{0}$ is $J_{1}$. So the image of $G^{4m}u_{0}$ is $J_{20m+1}$, and Lemma \ref{lemma4.9}(a) shows that the image of $f^{*}$ in $N2/N1$ is $O^{*}(J_{20m-3})$. Since $J_{20m-3},J_{20m-7}$ and $J_{20m-9}$ are all in $N2b$, the image of $f^{*}$ in $N2a$ is in fact $O^{*}(J_{20m-11})$.
\item[(c)] By Theorem \ref{theorem3.5}, $g_{12m+2}=(t^{6}+t^{5})^{2m}(t^{2}+t)+g^{*}$ with $\mathrm{degree}\ g^{*}\le 12m$. Let $f^{*}=(r^{2}+r)g^{*}(r^{2})$. Then $f_{12m+2}=G^{4m}(u_{2}+u_{1})+f^{*}$, and once again $f^{*}$ is in $N2$. The paragraph following Definition \ref{def4.8} shows that the image of $u_{1}$ is $J_{7}+J_{3}$. So the image of $G^{4m}u_{1}$ is $J_{20m+7}+J_{20m+3}$. Also, Lemma \ref{lemma4.9}(b) shows that the image of $f^{*}$ is $O^{*}(J_{20m+1})$.
\item[(b)] By Theorem \ref{theorem3.5}, $g_{12m+6}=(t^{6}+t^{5})^{2m}(t^{6}+t^{5}+t^{4}+t^{3}+t^{2}+t)+g^{*}$ with $\mathrm{degree}\ g^{*}\le 12m$. Let $f^{*}=(r^{2}+r)g^{*}(r^{2})$. Then $f_{12m+6}=G^{4m}(G^{2}u_{0}+u_{4}+u_{2}+u_{1})+f^{*}$, and once again $f^{*}$ is in $N2$. The image of $G^{4m+2} u_{0}$ in $N2/N1$ is $J_{20m+11}$ which lies in $N2b$. The paragraph following Definition \ref{def4.8} shows that the image of $u_{4}+u_{1}$ is $J_{3}$. So the image of $G^{4m}(u_{4}+u_{1})$ is $J_{20m+3}$. And as in (c), the image of $f^{*}$ is $O^{*}(J_{20m+1})$
\item[(d)] By Theorem \ref{theorem3.5}, $g_{12m+8}=(t^{6}+t^{5})^{2m}(t^{8}+t^{3})+g^{*}$ with $\mathrm{degree}\ g^{*}\le 12m+4$. Let $f^{*}=(r^{2}+r)g^{*}(r^{2})$. Observing that $t^{8}+t^{3}=(t^{6}+t^{5})(t^{2}+t+1)+(t^{5}+t^{3})$, we find that $f_{12m+8}=G^{4m}(G^{2}(u_{2}+u_{1}+u_{0})+u_{5})+f^{*}$, so that $f^{*}$ is in $N2$. The image of $u_{1}+u_{0}$ in $N2/N1$ is $J_{7}+J_{3}+J_{1}$, and the image of $G^{4m+2}(u_{2}+u_{1}+u_{0})$ therefore lies in $N2b$. Since the image of $u_{5}$ in $N2/N1$ is $J_{11}+J_{9}+J_{7}$, the image of $G^{4m}u_{5}$ is $J_{20m+11}+J_{20m+9}+J_{20m+7}$ which projects to $J_{20m+9}+J_{20m+7}$ in $N2a$.  Lemma \ref{lemma4.9}(a) shows that the image of $f^{*}$ is $O^{*}(J_{20m+7})$ completing the proof.
\end{enumerate}
\qed
\end{proof}

\section{The action of \bm{$T_{p}$} and the main theorem}
\label{section5}

Following the ideas sketched in the introduction we use Theorems \ref{theorem4.3} and \ref{theorem4.10}(e) to derive a result about a certain Hecke algebra in level $\Gamma_{0}(5)$. We refer extensively to \cite{4}. Instead of $r$ being an indeterminate it is now the explicit element $\sum_{n>0}(x^{n^{2}}+x^{2n^{2}}+x^{5n^{2}}+x^{10n^{2}})$ of $Z/2[[x]]$. Then $Z/2[r]$ is a subspace of $Z/2[[x]]$, and Theorem 1.11 of \cite{4} shows that it is the space, $M$, of mod~2 modular forms of level  $\Gamma_{0}(5)$. That theorem also shows that the subspace $\modd$ of $M$ consisting of odd power series lying in $M$ is just the $\modd$ of the present paper, spanned by the $(r^{2}+r)r^{2n}$.

Now for $p\ne 2$ or $5$ we have formal Hecke operators $T_{p}:Z/2[[x]]\rightarrow Z/2[[x]]$. They commute and stabilize $M$ and $\modd$; see Theorem 1.14 of \cite{4} and the paragraph preceding it.

\begin{definition}
\label{def5.1}
$U_{5}:Z/2[[x]]\rightarrow Z/2[[x]]$ takes $\sum c_{n}x^{n}$ to $\sum c_{5n}x^{n}$. 
\end{definition}

\begin{lemma}
\label{lemma5.2}
$U_{5}$ commutes with the $T_{p}$ and stabilizes $M$ and $\modd$.
\end{lemma}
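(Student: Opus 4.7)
The plan is to verify the three claims — stability of $\modd$, stability of $M$, and commutation with the $T_p$ — separately, relying on the formal Hecke framework already set up in \cite{4}.

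Stability of $\modd$ is a pure parity argument. If $f=\sum c_n x^n$ has $c_n=0$ for every even $n$, then the coefficient of $x^n$ in $U_5 f$ equals $c_{5n}$, and since $5n$ has the same parity as $n$, this coefficient vanishes whenever $n$ is even. Hence $U_5 f$ is odd, and combined with stability of $M$ (established next) we get $U_5 f\in\modd$.

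Stability of $M$ is the substantive point. By Theorem 1.11 of \cite{4}, $M=Z/2[r]$, so it suffices to exhibit $U_5\mid_M$ as a semi-linear operator in the sense of Definition \ref{def1.2} that agrees with the formal operator $U$ of Definition \ref{def1.3}. This reduces to two checks. The first is the semi-linearity relation $U_5(Gf)=F\cdot U_5(f)$; this is the mod-$2$ reduction of the classical power-series identity
\[
U_p\bigl(f(x)\cdot g(x^p)\bigr)=g(x)\cdot U_p f(x),
\]
applied at $p=5$ using the identifications of $G$ with $\Delta(5z)$ and $F$ with $\Delta(z)$ modulo $2$ from \cite{4}. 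The second is the explicit list of values $U_5(1,r,r^2,r^3,r^4,r^5)=(1,r,r^2,r^3+r^2+r,r^4,r^5+r^4+r)$, which is a finite coefficient extraction from the defining series $r=\sum_{n>0}(x^{n^2}+x^{2n^2}+x^{5n^2}+x^{10n^2})$. Once both items are in hand, the $Z/2[G]$-basis $\{1,r,\ldots,r^5\}$ of $Z/2[r]$ forces $U_5(M)\subset M$.

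For commutation: for $p\neq 2,5$ the operators $T_p$ and $U_5$ on $Z/2[[x]]$ commute classically, because they arise from Hecke operators of distinct prime indices in level $\Gamma_0(5)$. In the formal framework of \cite{4} both $T_p$ and $U_5$ are explicit $Z/2$-linear maps on $Z/2[[x]]$ (see the paragraph preceding Theorem 1.14 of \cite{4}), and the commutation is a straightforward coefficient-level identity. Restricting to $M$ and $\modd$ then yields the required commutations there.

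The main obstacle is the explicit coefficient computation that pins down $U_5(r^i)$ for $i=0,\ldots,5$; this combinatorial step is what is really imported from \cite{4}. Everything else in the lemma — parity, semi-linearity, commutation — is essentially formal once that step is granted.
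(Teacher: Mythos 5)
Your overall strategy for the stabilization of $M$ is genuinely different from the paper's. The paper proves Lemma \ref{lemma5.2} by the classical weight filtration: it reduces to showing that $U_{5}$ stabilizes the span of the $r^{i}$, $0\le i\le 2m$, and gets this because the classical operator $\sum c_{n}x^{n}\mapsto\sum c_{5n}x^{n}$ preserves the space of weight-$4m$ holomorphic forms on $\Gamma_{0}(5)$, whose mod-$2$ reductions span exactly those $r^{i}$ (the same mechanism as Theorem 1.14 of \cite{4}). The explicit identification of $U_{5}|_{M}$ with the formal $U$ is deferred to Lemma \ref{lemma5.4}. You instead front-load that identification: semi-linearity plus the six values $U_{5}(r^{i})$, $i\le 5$. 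That route can be made to work, but it is not what the paper does here, and as written it has a real gap.

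The gap is your claim that the list $U_{5}(1,r,\ldots,r^{5})=(1,r,r^{2},r^{3}+r^{2}+r,r^{4},r^{5}+r^{4}+r)$ ``is a finite coefficient extraction from the defining series.'' It is not: each of these is an asserted equality of two power series in $Z/2[[x]]$, and no finite check of coefficients proves such an equality unless you already know both sides lie in a fixed finite-dimensional space --- which is precisely the a priori input that the paper's classical argument supplies, and which your proposal omits. (You also attribute this computation to \cite{4}, but $U_{5}$ is introduced only in the present paper; \cite{4} contains no such computation.) The only value that admits a direct verification is $U_{5}(r)=r$, via a term-by-term bijection on the theta series $r=\sum(x^{n^{2}}+x^{2n^{2}}+x^{5n^{2}}+x^{10n^{2}})$: the representations of $5m$ in those four shapes biject with the representations of $m$. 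To get from there to $r^{2},r^{4},r^{8}$ one needs the Frobenius compatibility $U_{5}(f^{2})=(U_{5}f)^{2}$, and to reach $r^{3}$ and $r^{5}$ one needs, in addition to semi-linearity, polynomial identities such as $r^{5}=r^{8}+G(r^{2}+r+1)$ and $G(r^{4}+r^{3})=r^{10}+r^{8}$ --- this is exactly the content of the proof of Lemma \ref{lemma5.4}. Your parity argument for $\modd$ and the coefficient-level commutation $T_{p}U_{5}=U_{5}T_{p}$ for $p\ne 5$ are fine; the missing piece is a legitimate proof that $U_{5}(r^{i})\in Z/2[r]$ for $3\le i\le 5$, either by the classical modular-forms argument or by the Frobenius-and-identities reduction to $U_{5}(r)=r$.
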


\begin{proof}
It's enough to show that $U_{5}$ stabilizes the space spanned by the $r^{i}$ with $0\le i\le 2m$. We argue as in the proof of Theorem 1.14 of \cite{4}, using the classical Hecke operator $\sum c_{n}x^{n}\rightarrow \sum c_{5n}x^{n}$ on weight $4m$ holomorphic modular forms of level $\Gamma_{0}(5)$.
\qed
\end{proof}

\begin{lemma}
\label{lemma5.3}
$F=\sum_{n\ \mathit{odd},\ n>0} x^{n^{2}}$, and $G=\sum_{n\ \mathit{odd},\ n>0} x^{5n^{2}}$.
\end{lemma}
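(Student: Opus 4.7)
The plan is to identify the polynomials $F=r(r+1)^{5}$ and $G=r^{5}(r+1)$, when evaluated at the explicit power series $r$ fixed in Section~\ref{section5}, with classical modular forms whose reductions mod $2$ are known to be the asserted theta series. The first step is to invoke the setup of \cite{4}: there $F$ and $G$ are defined as the mod~$2$ reductions of the $q$-expansions at infinity of $\Delta(z)$ and $\Delta(5z)$, and as part of the identification of $Z/2[r]$ with the ring $M$ of mod~$2$ modular forms of level $\Gamma_{0}(5)$ (Theorem 1.11 of \cite{4} together with the definition of $r$ there) it is shown that, for the explicit $r$ chosen here, these mod~$2$ reductions are exactly the polynomials $r(r+1)^{5}$ and $r^{5}(r+1)$. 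Under that identification, Lemma~\ref{lemma5.3} reduces to a classical computation of $\Delta(z)$ and $\Delta(5z)$ modulo $2$.

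For that computation I would start from the Jacobi cube identity
\[
\eta(z)^{3} = \sum_{n\ge 0}(-1)^{n}(2n+1)\,q^{(2n+1)^{2}/8}.
\]
Reducing mod~$2$ the odd coefficients $2n+1$ collapse to $1$, giving $\eta(z)^{3} \equiv \sum_{n\ge 0} q^{(2n+1)^{2}/8} \pmod{2}$. Since $\Delta(z)=\eta(z)^{24}=(\eta(z)^{3})^{8}$, applying the Frobenius in characteristic $2$ (i.e.\ raising to the eighth power) yields
\[
\Delta(z)\;\equiv\;\sum_{n\ge 0} q^{(2n+1)^{2}}\;=\;\sum_{m>0,\ m\ \mathrm{odd}} x^{m^{2}}\pmod{2},
\]
and the substitution $z\mapsto 5z$, equivalently $q\mapsto q^{5}$, gives $\Delta(5z)\equiv \sum_{m>0,\ m\ \mathrm{odd}} x^{5m^{2}}\pmod{2}$. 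Combining the two steps yields both equalities of the lemma.

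The main obstacle is the first step: establishing that the polynomials $r(r+1)^{5}$ and $r^{5}(r+1)$ (with the explicit $r$ of Section~\ref{section5}) agree as elements of $Z/2[[x]]$ with the mod~$2$ reductions of $\Delta(z)$ and $\Delta(5z)$ is a substantive input from \cite{4} which cannot easily be short-circuited within the present note; a direct attack via manipulating $\sum x^{n^{2}}$ and $\sum x^{5n^{2}}$ as power series would require several classical theta-function identities in level $20$. Once that bridge is in place, the Jacobi--Frobenius reduction above is routine.
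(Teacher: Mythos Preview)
Your proposal is correct and follows essentially the same approach as the paper: the paper's proof is simply a citation to Theorem~1.12 of \cite{4} and the paragraph preceding it, and your argument spells out the content behind that citation (identifying $F$ and $G$ with the mod~$2$ reductions of $\Delta(z)$ and $\Delta(5z)$ via the setup of \cite{4}, then computing those reductions using Jacobi's identity and Frobenius). You correctly flag that the substantive input---matching $r(r+1)^{5}$ and $r^{5}(r+1)$ to $\Delta$ and $\Delta(5z)$ for the explicit $r$---comes from \cite{4} and is not reproved here.
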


\begin{proof}
See Theorem 1.12 of \cite{4} and the paragraph preceding it.
\qed
\end{proof}

\begin{lemma}
\label{lemma5.4}
$U_{5}:M\rightarrow M$ is the map $U$ of Definition \ref{def1.3}.
\end{lemma}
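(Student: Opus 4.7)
The plan is to verify that $U_5$ satisfies the two properties characterizing $U$: it is semi-linear in the sense of Definition \ref{def1.2}, and it sends the basis elements $1, r, r^2, r^3, r^4, r^5$ of $M$ over $Z/2[G]$ to the polynomials prescribed in Definition \ref{def1.3}.

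First, for semi-linearity: by Lemma \ref{lemma5.3} we have $G(x) = F(x^5)$. A direct computation from Definition \ref{def5.1} yields the standard identity $U_5(g(x) \cdot h(x^5)) = h(x) \cdot U_5(g(x))$ for all $g, h \in Z/2[[x]]$, since the coefficient of $x^{5k}$ in $g(x) h(x^5)$ is $\sum_m g_{5(k-m)} b_m$, which is the coefficient of $x^k$ in $h \cdot U_5(g)$. Specializing to $g = f$, $h = F$ gives $U_5(fG) = F \cdot U_5(f)$; combined with $Z/2$-linearity this is semi-linearity.

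The same style of computation, combined with the characteristic-$2$ identity $f^2 = \sum c_n x^{2n}$, yields $U_5(f^2) = (U_5 f)^2$. In particular $U_5(r^{2k}) = (U_5(r^k))^2$, reducing the verification to the three cases $r$, $r^3$, $r^5$. For $U_5(r)$ I would expand $r = \sum_{n>0}(x^{n^2} + x^{2n^2} + x^{5n^2} + x^{10n^2})$ and isolate the terms whose exponents are multiples of $5$: these come from $5n^2$ and $10n^2$ (for all $n$, contributing $\sum_n x^{n^2} + \sum_n x^{2n^2}$ after dividing the exponent by $5$), and from $n^2$ and $2n^2$ with $n = 5k$ (contributing $\sum_k x^{5k^2} + \sum_k x^{10k^2}$). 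The sum of these four series is exactly $r$, so $U_5(r) = r$.

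For $U_5(r^3) = r^3 + r^2 + r$ and $U_5(r^5) = r^5 + r^4 + r$, the cleanest route is to appeal to the identification of the $U_5$-action on $Z/2[r]$ contained in \cite{4}, whose level-$5$ framework is invoked throughout this section. Alternatively one can compute $r^3$ and $r^5$ modulo a sufficiently large power of $x$, extract the subseries at multiples of $5$, and match the result against $r^3 + r^2 + r$ and $r^5 + r^4 + r$; since the target lies in the six-dimensional $Z/2[G]$-submodule spanned by $1, r, \ldots, r^5$, only finitely many coefficients need to agree. As a consistency check, the semi-linear relation $U_5(r^6) = U_5(r^5) + U_5(G) = U_5(r^5) + F$ must equal the squaring identity's value $U_5(r^6) = (U_5(r^3))^2$; the conjectured values yield $r^6 + r^4 + r^2$ on both sides. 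Once all six values are confirmed, comparison with Definition \ref{def1.3} identifies $U_5$ with $U$. The main obstacle is $U_5(r^3)$ and $U_5(r^5)$: semi-linearity plus the squaring identity leave one free parameter among these two, so some genuinely new input from the explicit series for $r$ (or from \cite{4}) is needed.
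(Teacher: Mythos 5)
Your first three steps --- semi-linearity of $U_5$ via $G(x)=F(x^5)$, the squaring identity $U_5(f^2)=(U_5f)^2$, and the direct series computation $U_5(r)=r$ --- are exactly what the paper does, and they are fine. The problem is your last paragraph. You assert that semi-linearity plus squaring leave ``one free parameter'' among $U_5(r^3)$ and $U_5(r^5)$, so that new input from the explicit series (or from \cite{4}) is needed. That assertion is false, and the two fallbacks you offer leave the proof incomplete as written: the appeal to \cite{4} is unsubstantiated (identifying the action of $U_5$ on $Z/2[r]$ is precisely what this lemma is establishing, not something \cite{4} hands you), and the coefficient-matching alternative is not carried out --- moreover its termination criterion (``only finitely many coefficients need to agree'') already presupposes that $U_5(r^3)$ and $U_5(r^5)$ lie in a finite-dimensional piece of $Z/2[r]$, which is the content of Lemma \ref{lemma5.2}'s argument via classical Hecke operators, not something you have justified.

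The paper closes the gap with no further series computation, using exactly the tools you already have. From $U_5(r)=r$ and squaring, $U_5$ fixes $1,r,r^2,r^4,r^8$. Since $G=r^6+r^5$, one has $G(r^2+r+1)=r^8+r^5$, i.e.\ $r^5=r^8+G(r^2+r+1)$; semi-linearity then forces $U_5(r^5)=r^8+F(r^2+r+1)=r^5+r^4+r$. Squaring gives $U_5(r^{10})=r^{10}+r^8+r^2$, and the identity $G(r^4+r^3)=r^{10}+r^8$ together with semi-linearity forces $F\cdot U_5(r^4+r^3)=U_5(r^{10})+r^8$, whence $U_5(r^3)=r^3+r^2+r$ after cancelling the nonzero factor $F$ in the domain $Z/2[r]$. (Your ``one free parameter'' count only uses the single relation $r^6=G+r^5$; the relation for $r^{10}$ supplies the second equation, and the two together have a unique solution.) So the missing idea is to exploit the $Z/2[G]$-module expressions of $r^5$ and $r^{10}$ in the basis $1,\dots,r^5$; with that, your proof becomes the paper's proof.
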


\begin{proof}
Lemma \ref{lemma5.3} and the definition of $U_{5}$ show that $U_{5}(Gf)=FU_{5}(f)$. So $U_{5}:M\rightarrow M$ like $U$ is semi-linear, and it's enough to show that they agree on the basis $1,r,r^{2},r^{3},r^{4},r^{5}$ of $Z/2[r]$ as $Z/2[G]$-module. One sees directly from the definitions that $U_{5}(r)=r$. Then $U_{5}$ and $U$ each fix $1,r,r^{2},r^{4}$ and $r^{8}$. Since $r^{5}=r^{8}+G((r^{2}+r+1)$, we may use semi-linearity to see that $U_{5}(r^{5})=U(r^{5})$.  Then $U_{5}(r^{10})=U(r^{10})$, and since $G(r^{4}+r^{3})=r^{10}+r^{8}$ another application of semi-linearity shows that $F\cdot U_{5}(r^{3})=F\cdot U(r^{3})$.

Recall now that $N2$ and $N1$ are the $Z/2[G^{2}]$-submodules of $\modd$ with bases $\{G,F,F^{2}G,F^{3},F^{4}G\}$ and $\{G\}$, and that $N2a$ and $N2b$ are the subspaces of $N2/N1$ spanned by the $J_{k}$ in $\modd$ with $\chi(k)=1$ and $-1$ respectively. $N2$ and $N1$ also appear in Definition 1.15 of \cite{4} and coincide with the $N2$ and $N1$ just described. Also, the $J_{k}$ with $(k,10)=1$ appear in Definitions 1.16 and Theorem 1.17 of \cite{4}. Those definitions, in terms of $F$ and $G$, show that the $J_{k}$ are just the $J_{k}$ in our section \ref{section4}. It follows that the subspaces $N2a$ and $N2b$ of $N2/N1$ appearing in the paragraph following the proof of Lemma 2.13 of \cite{4} are the $N2a$ and $N2b$ of our section \ref{section4}.
\qed
\end{proof}

\begin{lemma}
\label{lemma5.5}
Let $K$ be the kernel of $U_{5}+I$ acting on the space $\modd$ of odd mod~2 modular forms of level $\Gamma_{0}(5)$.  Let $N2,N1,N2a$ and $N2b$ be as in \cite{4}. Then $K\subset N2$ and the composition of $N2\rightarrow N2/N1$ with the projection map $N2/N1=N2a\oplus N2b\rightarrow N2a$ maps $K$ bijectively to $N2a$.
\end{lemma}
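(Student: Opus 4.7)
My plan is to assemble the lemma directly from results already established, with essentially no new work. The key observation is that by the time we arrive at Lemma \ref{lemma5.5}, all four of the algebraic objects $U$, $\modd$, $N1$, $N2$ (and, via the basis of $J_k$'s, the splitting $N2/N1 = N2a \oplus N2b$) have been identified with their modular-form namesakes from \cite{4}. So the lemma should reduce to a citation of Theorems \ref{theorem4.3} and \ref{theorem4.10}(e).

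First I would invoke Lemma \ref{lemma5.4}, which says that $U_5: M \to M$ coincides with the semi-linear operator $U$ of Definition \ref{def1.3}. Since both operators stabilize $\modd$, the kernel of $U_5 + I$ acting on $\modd$ is literally the same subspace as the kernel of $U + I$ acting on $\modd$; this is the space $K$ of Corollary \ref{corollary2.13}, Theorem \ref{theorem4.3}, and Theorem \ref{theorem4.10}. Next, the closing paragraph of the proof of Lemma \ref{lemma5.4} already identifies the $Z/2[G^2]$-submodules $N2 \supset N1$ of the present paper with the objects of the same names in \cite{4}, and identifies our $J_k$ (Definition \ref{def4.5} together with the relation $J_{k+10} = G^2 J_k$) with the $J_k$ of Theorem 1.17 of \cite{4}. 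Consequently the direct-sum decomposition $N2/N1 = N2a \oplus N2b$ of Definition \ref{def4.7} matches the decomposition in \cite{4}.

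With these identifications in hand, the inclusion $K \subset N2$ is exactly the statement of Theorem \ref{theorem4.3}. For the bijectivity, the composite $K \hookrightarrow N2 \twoheadrightarrow N2/N1 \twoheadrightarrow N2a$ is precisely the map considered in Theorem \ref{theorem4.10}(e), whose bijectivity was established there by producing an explicit upper-triangular image for the basis $\{f_n : n \equiv 0, 2 \bmod 6\}$ of $K$ against the basis $\{J_k : \chi(k)=1\}$ of $N2a$.

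There is no genuine obstacle: the real work was done in Sections \ref{section1}--\ref{section4} (the recursion for $C_n$, the divisibility/leading-term refinements of Lemma \ref{lemma3.4} and Theorem \ref{theorem3.5}, and the term-by-term matching in Theorem \ref{theorem4.10}), while the bridge between the ``masked'' polynomial-ring picture and the modular-forms picture was built in Lemmas \ref{lemma5.2}--\ref{lemma5.4}. The proof of Lemma \ref{lemma5.5} therefore consists of a single sentence citing Lemma \ref{lemma5.4}, Theorem \ref{theorem4.3}, and Theorem \ref{theorem4.10}(e).
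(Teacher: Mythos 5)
Your proposal is correct and matches the paper's own proof essentially verbatim: the paper likewise cites Lemma \ref{lemma5.4} to identify $K$ with the kernel of $U+I$ on $\modd$, then invokes Theorem \ref{theorem4.3}, Theorem \ref{theorem4.10}(e), and the identification of $N2$, $N1$, $N2a$, $N2b$ with their counterparts in the reference. No gaps.
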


\begin{proof}
We have seen that $\modd$ is just the subspace of $Z/2[r]$ spanned by the $(r^{2}+r)r^{2n}$. Furthermore, by Lemma \ref{lemma5.4}, $K$ is the kernel of $U+I$ acting on this space. The results now follow from Theorem \ref{theorem4.3}, Theorem \ref{theorem4.10}(e), and the identification of the $N2,N1,N2a$ and $N2b$ of \cite{4} with the $N2,N1,N2a$ and $N2b$ defined in this paper.
\qed
\end{proof}

Now Lemma \ref{lemma5.2} shows that the $T_{p}:Z/2[[x]]\rightarrow Z/2[[x]]$, $p\ne 2$ or $5$, stabilize not only $\modd$, but also the $K$ of Lemma \ref{lemma5.5}. Also by Theorem 2.19 of \cite{4} and the remark following Theorem 1.17 of \cite{4}, these $T_{p}$ also stabilize $N2$ and $N1$, and therefore act on $N2/N1=N2a\oplus N2b$. Since $\chi(3)=\chi(7)=1$, Corollary 2.18 of \cite{4} shows that $T_{3}$ and $T_{7}$ stabilize $N2a$ and $N2b$.

\begin{lemma}
\label{lemma5.6}
The bijection $K\rightarrow N2a$ of Lemma \ref{lemma5.5} preserves the action of $T_{3}$ and $T_{7}$.
\end{lemma}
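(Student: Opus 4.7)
The plan is to verify equivariance stepwise along the composition $K \hookrightarrow N2 \twoheadrightarrow N2/N1 \twoheadrightarrow N2a$, where the last arrow is the projection associated to the direct sum decomposition $N2/N1 = N2a\oplus N2b$. Since all three arrows are already in hand from Lemma \ref{lemma5.5}, the only content is to check that each respects the action of $T_3$ and $T_7$.

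First, Lemma \ref{lemma5.2} says $T_p$ commutes with $U_5$ for every $p\ne 2,5$, so $T_p$ stabilizes $\ker(U_5+I)=K$; in particular the inclusion $K\subset N2$ intertwines the $T_3$ and $T_7$ actions. Next, as noted in the paragraph following Lemma \ref{lemma5.5}, Theorem 2.19 of \cite{4} (together with the remark after Theorem 1.17 of \cite{4}) asserts that $T_3$ and $T_7$ stabilize both $N2$ and $N1$, so they descend to endomorphisms of the quotient $N2/N1$ and the quotient map is $T_p$-equivariant. Finally, Corollary 2.18 of \cite{4} asserts that since $\chi(3)=\chi(7)=1$, the operators $T_3$ and $T_7$ preserve the splitting $N2/N1=N2a\oplus N2b$, i.e.\ they stabilize each summand. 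Consequently the projection $N2/N1\to N2a$ along $N2b$ commutes with $T_3$ and $T_7$ as well.

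Composing the three equivariant maps, the bijection $K\to N2a$ produced by Lemma \ref{lemma5.5} is $T_3$- and $T_7$-equivariant, which is the claim.

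The main ``obstacle'' here is conceptual rather than computational: one must carefully match our $N2,N1,N2a,N2b$ (built from $r$ in section \ref{section4}) with the objects of the same name in \cite{4}, and check that the Hecke-stability and decomposition results quoted from \cite{4} really apply. That identification was already carried out in the proof of Lemma \ref{lemma5.4}, so here only the direct citation of the two results from \cite{4} is needed.
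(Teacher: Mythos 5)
Your proposal is correct and follows essentially the same route as the paper: the paper's (very terse) proof reduces to the statement that $T_{3}$ and $T_{7}$ stabilize both $N2a$ and $N2b$, relying on exactly the facts you cite (Lemma \ref{lemma5.2} for the stability of $K$, Theorem 2.19 and the remark after Theorem 1.17 of \cite{4} for $N2$ and $N1$, and Corollary 2.18 of \cite{4} for the summands), all of which are assembled in the paragraph immediately preceding the lemma. Your write-up simply makes the stepwise equivariance along the composite explicit.
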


\begin{proof}
It suffices to show that the projection map $N2a\oplus N2b\rightarrow N2a$ preserves the action of $T_{3}$ and $T_{7}$. But $T_{3}$ and $T_{7}$ stabilize both $N2a$ and $N2b$.
\qed
\end{proof}

Now let $\mathit{pr}:N2\rightarrow Z/2[[x]]$ be the map $\sum c_{n}x^{n}\rightarrow \sum_{(n,5)=1}c_{n}x^{n}$. This map annihilates $N1$ and gives a map $\mathit{pr}:N2/N1 \rightarrow Z/2[[x]]$. Let $D=\mathit{pr}(J_{1})=\sum_{(n,10)=1,\, n>0}x^{n^{2}}$.

In section 2 of \cite{4} we showed that $\mathit{pr}$ maps $N2a$ bijectively to a $Z/2[G^{4}]$-submodule of $Z/2[[x]]$, denoted by $W_{a}$, generated by $D,D^{8}/G,D^{2}G$ and $D^{4}G$.  Furthermore $T_{3}$ and $T_{7}$ stabilize $W_{a}$, and $\mathit{pr}:N2a\rightarrow W_{a}$ evidently preserves the action of $T_{3}$ and $T_{7}$. We conclude that the composite map $K\rightarrow N2a\rightarrow W_{a}$ is a bijection preserving the action of $T_{3}$ and $T_{7}$. Note also that this bijection maps the element $r^{2}+r=F+G$ of $K$ to $D$.

We will now use a deep result from \cite{4} to prove the following main theorem.

\begin{theorem}
\label{theorem5.7}
Let $K$ be as in Lemma \ref{lemma5.5}. Then there are $m_{i,j}$ in $K$ such that:
\vspace{-2ex}
\begin{enumerate}
\item[(a)] $m_{0,0}=r^{2}+r=F+G$.
\item[(b)] $T_{3}(m_{i,j})=m_{i-1,j}$ or $0$ according as $i>0$ or $i=0$.
\item[(c)] $T_{7}(m_{i,j})=m_{i,j-1}$ or $0$ according as $j>0$ or $j=0$.
\item[(d)] The $m_{i,j}$ are a $Z/2$-basis of $K$.
\end{enumerate}
\end{theorem}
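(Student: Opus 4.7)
The plan is to transport the Nicolas--Serre--type basis from $W_{a}$ back to $K$ via the $T_{3}$- and $T_{7}$-equivariant bijection assembled in the paragraphs immediately preceding the theorem. Concretely, combining Lemma \ref{lemma5.5}, Lemma \ref{lemma5.6}, and the observation that $\mathit{pr}:N2a\to W_{a}$ is a $Z/2$-linear bijection intertwining $T_{3}$ and $T_{7}$, we obtain a $Z/2$-linear isomorphism
\[
\Phi : K \longrightarrow W_{a}
\]
which commutes with $T_{3}$ and $T_{7}$ and satisfies $\Phi(F+G)=D$.

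Next I would invoke the ``deep result'' from \cite{4} alluded to just before the theorem statement. That result establishes, for the space $W_{a}$, the existence of elements $w_{i,j}$, $i,j\ge 0$, forming a $Z/2$-basis of $W_{a}$ with $w_{0,0}=D$, $T_{3}(w_{i,j})=w_{i-1,j}$ when $i>0$ (and $0$ when $i=0$), and $T_{7}(w_{i,j})=w_{i,j-1}$ when $j>0$ (and $0$ when $j=0$); in other words, $W_{a}$ admits a basis adapted to $T_{3}$ and $T_{7}$ in the sense of Nicolas--Serre, with the ``base'' element being $D$. (This is the content of the main level $\Gamma_{0}(5)$ theorem of \cite{4} applied to $N2a$, transported to $W_{a}$ via $\mathit{pr}$.)

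Given the $w_{i,j}$, the definitions force the choice
\[
m_{i,j} \;:=\; \Phi^{-1}(w_{i,j}).
\]
Since $\Phi$ is a $Z/2$-linear bijection, the $m_{i,j}$ form a $Z/2$-basis of $K$, giving (d). Since $\Phi$ commutes with $T_{3}$ and $T_{7}$ and carries $m_{i,j}$ to $w_{i,j}$, the relations $T_{3}w_{i,j}=w_{i-1,j}$ (resp.\ $0$) and $T_{7}w_{i,j}=w_{i,j-1}$ (resp.\ $0$) pull back to the analogous relations for $m_{i,j}$, giving (b) and (c). Finally, since $\Phi(F+G)=D=w_{0,0}$, we have $m_{0,0}=F+G=r^{2}+r$, proving (a).

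The one step requiring genuine content beyond the present note is the existence of the basis $\{w_{i,j}\}$ of $W_{a}$ with the stated Hecke properties; this is precisely where the ``deep result from \cite{4}'' does the real work. Everything else is simply the transport of structure along a bijection that was already shown to be Hecke-equivariant for $T_{3}$ and $T_{7}$ and to send $F+G$ to $D$.
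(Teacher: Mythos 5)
Your argument is correct and is essentially identical to the paper's: both reduce the theorem, via the $T_{3}$- and $T_{7}$-equivariant bijection $K\rightarrow W_{a}$ sending $F+G$ to $D$, to the existence of an adapted basis of $W_{a}$, which is exactly Corollary 5.13 of \cite{4}. The only difference is that you spell out the transport-of-structure step explicitly, which the paper leaves as a one-line remark.
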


\begin{proof}
In view of the bijection of $K$ with $W_{a}$, preserving the action of $T_{3}$ and $T_{7}$, it suffices to prove the above result with $K$ and $F+G$ replaced by $W_{a}$ and $D$. The result for $W_{a}$ is precisely Corollary 5.13 of \cite{4}. 
\qed
\end{proof}

Now there is an action of $Z/2[X,Y]$ on $K$ with $X$ and $Y$ acting by $T_{3}$ and $T_{7}$. Theorem \ref{theorem5.7} shows that $(X,Y)$ acts ``locally nilpotently'', i.e., each $f$ in $K$ is annihilated by some power of $(X,Y)$. So we get an action of $Z/2[[X,Y]]$, with $X$ and $Y$ acting by $T_{3}$ and $T_{7}$.

\begin{theorem}
\label{theorem5.8}
The above action is faithful. Furthermore if $p\ne 2$ or $5$ then $T_{p}:K\rightarrow K$ is multiplication by some element of the maximal ideal $(X,Y)$ of $Z/2[[X,Y]]$.
\end{theorem}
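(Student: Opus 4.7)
The plan is to exploit the basis $m_{i,j}$ of Theorem \ref{theorem5.7}, which presents $K$ as a $Z/2[[X,Y]]$-module (with $X=T_{3}$, $Y=T_{7}$) of ``Matlis-dual'' type: $X\cdot m_{i,j}=m_{i-1,j}$ and $Y\cdot m_{i,j}=m_{i,j-1}$, with $m_{-1,j}=m_{i,-1}=0$. In this model, faithfulness is immediate. Given a nonzero $f=\sum a_{ij}X^{i}Y^{j}\in Z/2[[X,Y]]$, pick any $(i_{0},j_{0})$ with $a_{i_{0},j_{0}}=1$; then
\[
f\cdot m_{i_{0},j_{0}}=\sum_{i\le i_{0},\,j\le j_{0}}a_{ij}\,m_{i_{0}-i,\,j_{0}-j}
\]
is a finite sum whose coefficient of $m_{0,0}$ is $a_{i_{0},j_{0}}=1$, so $f$ acts non-trivially.

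For the second claim I would first show that $T_{p}$ is multiplication by some $f_{p}\in Z/2[[X,Y]]$. By Lemma \ref{lemma5.2}, $T_{p}$ stabilises $K$ and commutes with $T_{3}$ and $T_{7}$. Expand $T_{p}(m_{I,J})=\sum\lambda^{(I,J)}_{i,j}m_{i,j}$; applying $T_{3}^{a}T_{7}^{b}$ and using commutativity yields $\lambda^{(I-a,J-b)}_{i,j}=\lambda^{(I,J)}_{i+a,j+b}$, so $\lambda^{(I,J)}_{i,j}$ depends only on $(I-i,J-j)$. Putting $\mu(s,t)=\lambda^{(s,t)}_{0,0}\in Z/2$ and $f_{p}(X,Y)=\sum\mu(s,t)X^{s}Y^{t}$, one checks on every basis element that $T_{p}$ acts exactly as multiplication by $f_{p}$ does, so $T_{p}=f_{p}(T_{3},T_{7})$ on $K$.

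It then suffices to show $\mu(0,0)=0$. Since $T_{3}(T_{p}m_{0,0})=T_{p}(T_{3}m_{0,0})=0$ and similarly for $T_{7}$, and since the only elements of $K$ annihilated by both $T_{3}$ and $T_{7}$ are multiples of $m_{0,0}$, we have $T_{p}(m_{0,0})\in\{0,m_{0,0}\}$, with $\mu(0,0)$ equal to $0$ or $1$ accordingly. Now $m_{0,0}=F+G$ and, by Lemma \ref{lemma5.3}, $F$ and $G$ are the mod-$2$ reductions of $\Delta(z)$ and $\Delta(5z)$, which are classical Hecke eigenforms with eigenvalue $\tau(p)$ at $p\ne 5$. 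A short direct computation on $F=\sum_{n\ \mathrm{odd}}x^{n^{2}}$, using $T_{p}(\sum a_{n}x^{n})=\sum\bigl(a_{pn}+p^{11}a_{n/p}\bigr)x^{n}$ together with $p^{11}\equiv 1\mod{2}$ for odd $p$, shows that the two contributions to the $x^{m}$-coefficient of $T_{p}F$ both equal the indicator that $m=pk^{2}$ for some odd $k$, and so cancel; the same argument gives $T_{p}G=0$. Hence $T_{p}(F+G)=0$, $\mu(0,0)=0$, and $f_{p}\in(X,Y)$.

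The only genuine content is the vanishing $T_{p}(F+G)=0$; the rest is formal bookkeeping in the $m_{i,j}$-basis. The minor subtlety to check carefully is that the formal $T_{p}$ of Lemma \ref{lemma5.2} (as defined in \cite{4}) agrees with the classical Hecke operator on $\Delta(z)$ and $\Delta(5z)$, but this compatibility is built into the construction of $T_{p}$ in \cite{4} via the classical Hecke action on weight-$4m$ holomorphic forms of level $\Gamma_{0}(5)$.
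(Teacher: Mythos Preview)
Your proof is correct and follows essentially the same route as the paper. The paper's own proof is very terse: it says faithfulness is an easy consequence of Theorem~\ref{theorem5.7}, cites the proof of Theorem~4.16 of \cite{2} for the fact that a $Z/2[[X,Y]]$-linear endomorphism of $K$ is multiplication by some $u$, and then simply asserts $T_{p}(F+G)=0$ to conclude $u\in(X,Y)$. You have unpacked each of these steps explicitly---the Matlis-dual faithfulness argument, the translation-invariance computation showing $T_{p}$ is multiplication by $f_{p}$, and the direct verification that $T_{p}F=T_{p}G=0$---so your write-up is more self-contained but not different in substance. One small point worth making explicit in your second paragraph: the relation $\lambda^{(I-a,J-b)}_{i,j}=\lambda^{(I,J)}_{i+a,j+b}$ also forces $\lambda^{(I,J)}_{i,j}=0$ when $i>I$ or $j>J$ (apply $T_{3}^{I+1}$ or $T_{7}^{J+1}$), which you use implicitly when matching $T_{p}m_{I,J}$ with $f_{p}\cdot m_{I,J}$.
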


\begin{proof}
The first result is an easy consequence of Theorem \ref{theorem5.7}. If $p\ne 2$ or $5$, $T_{p}$ commutes with $T_{3}$ and $T_{7}$ and so is $Z/2[[X,Y]]$-linear. Theorem \ref{theorem5.7} then shows (see the proof of Theorem 4.16 of \cite{2}) that $T_{p}:K\rightarrow K$ is multiplication by some $u$ in $Z/2[[X,Y]]$. Since $T_{p}(m_{0,0})=T_{p}(F+G)=0$, $u$ is in $(X,Y)$.
\qed
\end{proof}

Finally, since the $T_{p}$, $p\ne 2$ or $5$, act on $K$, we have a ``completed shallow Hecke algebra'', $\mathit{HE}(K)$, attached to $K$. Theorem \ref{theorem5.8} tells us that $\mathit{HE}(K)$ is a power series ring in $T_{3}$ and $T_{7}$. At the same time the action of these $T_{p}$ on $N2/N1$ gives rise to a completed shallow Hecke algebra $\mathit{HE}(N2/N1)$. Theorem 7.2 of \cite{4} tells us that $\mathit{HE}(N2/N1)$ is a power series ring in $T_{3}$ and $T_{7}$ with an element of square $0$ adjoined. Arguing as in the final paragraph of \cite{3} we find:

\begin{theorem}
\label{theorem5.9}
There is an isomorphism $(\mathit{HE}(N2/N1))_{\mathit{red}}\rightarrow \mathit{HE}(K)$ taking $T_{p}$ to $T_{p}$ for each $p\ne 2$ or $5$. 
\end{theorem}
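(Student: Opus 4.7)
The plan is to construct the desired isomorphism as the reduction of a natural restriction homomorphism between the two Hecke algebras, and then to invoke the two structure theorems to see that the reduced map is forced to be the identity.

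First I would verify that the inclusion $K \hookrightarrow N2$ of Theorem \ref{theorem4.3}, followed by the projection $N2 \to N2/N1$, yields a $T_p$-equivariant injection $\iota : K \hookrightarrow N2/N1$ for every $p \ne 2$ or $5$. Equivariance is immediate from Lemma \ref{lemma5.2} (so each $T_p$ stabilizes $K$) together with the fact noted just before Lemma \ref{lemma5.6} that such $T_p$ also stabilize $N2$ and $N1$. Injectivity holds because the composition $K \to N2/N1 \to N2a$ of Theorem \ref{theorem4.10}(e) is already a bijection, forcing $K \cap N1 = 0$. Hence $K' := \iota(K)$ is a Hecke-equivariantly isomorphic copy of $K$ sitting inside $N2/N1$, stable under all $T_p$ with $p \ne 2$ or $5$.

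Because $K'$ is stable under every element of $\mathit{HE}(N2/N1)$, restriction yields a continuous ring homomorphism $\rho : \mathit{HE}(N2/N1) \to \mathrm{End}(K')$ whose image is topologically generated by the restrictions of the $T_p$ and so equals $\mathit{HE}(K') = \mathit{HE}(K)$. By construction $\rho(T_p) = T_p$ for every admissible $p$, so $\rho$ is surjective. Now by Theorem \ref{theorem5.8}, $\mathit{HE}(K)$ is the formal power series ring $Z/2[[T_3, T_7]]$, hence an integral domain with no nonzero nilpotents. The kernel of $\rho$ therefore contains the nilradical of $\mathit{HE}(N2/N1)$, and $\rho$ descends to a surjection $\bar\rho : (\mathit{HE}(N2/N1))_{\mathit{red}} \twoheadrightarrow \mathit{HE}(K)$ that still sends each $T_p$ to $T_p$.

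Finally, Theorem 7.2 of \cite{4} identifies $\mathit{HE}(N2/N1)$ with $Z/2[[T_3, T_7]][\epsilon]/(\epsilon^{2})$, whose reduction is $Z/2[[T_3, T_7]]$. Thus $\bar\rho$ is a continuous surjection between two copies of $Z/2[[T_3, T_7]]$ sending $T_3 \mapsto T_3$ and $T_7 \mapsto T_7$; by continuity and topological generation, this forces $\bar\rho$ to be the identity map, hence an isomorphism. The main obstacle will be the bookkeeping in the first step—confirming that $\iota$ is both $T_p$-equivariant and injective and that the restriction map $\rho$ is well-defined and continuous with respect to the completed Hecke algebra topologies. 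Once that is in place, the conclusion is extracted almost formally from $\mathit{HE}(K)$ being a domain together with Theorem 7.2 of \cite{4}.
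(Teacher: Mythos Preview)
Your proposal is correct and is essentially the argument the paper has in mind: the paper's own proof is simply the sentence ``Arguing as in the final paragraph of \cite{3} we find'', and what you have written is precisely such an argument---build a $T_{p}$-equivariant restriction $\mathit{HE}(N2/N1)\to\mathit{HE}(K)$, observe that the target is reduced so the map factors through $(\mathit{HE}(N2/N1))_{\mathit{red}}$, and then compare both sides with $Z/2[[T_{3},T_{7}]]$ via Theorem~7.2 of \cite{4} and Theorem~\ref{theorem5.8}. Your flagged ``bookkeeping'' concern about continuity of $\rho$ is handled by the local nilpotence of $(T_{3},T_{7})$ on $N2/N1$, which makes every element of $\mathit{HE}(N2/N1)$ act on any given vector as a polynomial in the $T_{p}$ and hence preserve $K'$.
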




\end{document}